\newtheorem{theorem}{Theorem}[section]
\newtheorem{lemma}[theorem]{Lemma}
\newtheorem{proposition}[theorem]{Proposition}
\theoremstyle{definition}
\newtheorem{definition}[theorem]{Definition}
\newtheorem{example}[theorem]{Example}
\theoremstyle{remark}
\numberwithin{figure}{section}
\numberwithin{table}{section}
\newcommand*\acknowledgment[1]{%
	\begingroup\noindent
	\rightskip\leftskip
	\begin{flushleft}\textbf{\large Acknowledgment.}\, #1%
		\par\vspace*{1mm}\end{flushleft}\endgroup}
\begin{document}

\title[SOME PROPERTIES OF PROXIMAL HOMOTOPY THEORY]{SOME PROPERTIES OF PROXIMAL HOMOTOPY THEORY}

\author{MEL\.{I}H \.{I}S and \.{I}SMET KARACA}
\date{\today}

\address{\textsc{Melih Is,}
Ege University\\
Faculty of Sciences\\
Department of Mathematics\\
Izmir, Türkiye}
\email{melih.is@ege.edu.tr}
\address{\textsc{Ismet Karaca$^1$,}
Ege University\\
Faculty of Science\\
Department of Mathematics\\
Izmir, Türkiye}
\address{\textsc{Ismet Karaca$^2$,}
	Azerbaijan State Agrarian University\\
	Faculty Agricultural Economics\\
	Department of Agrarian Economics\\
	Gence, Azerbaijan}
\email{ismet.karaca@ege.edu.tr}

\subjclass[2010]{54E05, 54E17, 14D06, 55P05}

\keywords{Proximity, descriptive proximity, homotopy, fibration, cofibration}

\begin{abstract}
    Nearness theory comes into play in homotopy theory because the notion of closeness between points is essential in determining whether two spaces are homotopy equivalent. While nearness theory and homotopy theory have different focuses and tools, they are intimately connected through the concept of a metric space and the notion of proximity between points, which plays a central role in both areas of mathematics. This manuscript investigates some concepts of homotopy theory in proximity spaces. Moreover, these concepts are taken into account in descriptive proximity spaces.
\end{abstract}

\maketitle

\section{Introduction}
\label{intro}

\quad Topological perspective first appears in the scientific works of Riemann and Poincare in the 19th century\cite{Riemann:1851,Poincare:1895}. The concept reveals that the definitions of topological space emerge either through Kuratowski's closure operator\cite{Kuratowski:1958} or through the use of  open sets. Given the Kuratowski's closure operator, there are many strategies and approaches that seem useful in different situations and are worth developing, as in nearness theory. Proximity spaces are created by reflection of the concept of being near/far on sets. Given a nonempty set $X$, and any subsets $E$, $F \subset X$, we say that $E$ is near $F$ if $E \cap F \neq \emptyset$. A method based on the idea of near sets is first proposed by Riesz, is revived by Wallace, and is axiomatically elaborated it by Efremovic\cite{Riesz:1908,Wallace:1941,Efremovic:1952}. Let $X$ be a nonempty set. A proximity is a binary relation (actually a nearness relation) defined on subsets of $X$ and generally denoted by $\delta$. One can construct a topology on $X$ induced by the pair $(X,\delta)$ using the closure operator (named a proximity space). Indeed, for any point $x \in X$, if $\{x\}$ is near $E$, then $x \in \overline{E}$. In symbols, if $\{x\} \delta E$, then $x \delta \overline{E}$. It should be noted that the notation $x \delta E$ is sometimes used instead of $\{x\} \delta E$ for abbreviation (in particular in a metric space). It appears that several proximities may correspond in this way to the same topology on $X$. Moreover, several topological conclusions can be inferred from claims made about proximity spaces.

\quad The near set theory is reasonably improved by Smirnov's compactification, Leader's non-symmetric proximity, and Lodato's symmetric proximity\cite{Smirnov:1952,Leader:1964,Lodato:1964}. 
Peters also contributes to the theory of nearness by introducing the concept of spatial nearness and descriptive nearness\cite{Peters1:2007,Peters2:2007}. In addition, the strong structure of proximity spaces stands out in the variety of application areas: In\cite{NaimpallyPeters:2013}, it is possible to see the construction of proximity spaces in numerous areas such as cell biology, the topology of digital images, visual marketing, and so on. In a broader context, the application areas of near spaces are listed along with the history of the subject in \cite{PetersNaimpally:2012}. According to this, some near set theory-related topics are certain engineering problems, image analysis, and human perception. The main subject of this article, algebraic topology approaches in proximity spaces, is a work in progress in the literature. Mapping spaces, one of the fundamental concepts in homotopy theory, is examined in proximity spaces in \cite{PeiRen:1985}. The proximal setting of the notion fibration is first defined in \cite{PetersTane2:2022}. Peters and Vergili have recently published interesting research on descriptive proximal homotopy, homotopy cycles, path cycles, and Lusternik-Schnirelmann theory of proximity spaces\cite{PetersTane:2021,PetersTane2:2021,PetersTane:2022,PetersTane2:2022}.  

\quad This paper is primarily concerned with the theory of proximal homotopy and is organized as follows. In Section \ref{sec:1}, we discuss the general properties of proximity and descriptive proximity spaces. Section \ref{sec:2} covered 4 main topics in proximity spaces: Mapping spaces, covering spaces, fibrations, and cofibrations. They provide different types of examples and frequently used algebraic topology results in proximal homotopy cases. Next, the descriptive proximal homotopy theory, which is handled in Section \ref{sec:4}, discusses the ideas from Section \ref{fig:3} and illustrates them with examples by using feature vectors as color scales. Finally, the last section establishes the direction for future works by clearly emphasizing the application areas of homotopy theory.

\section{Preliminaries}
\label{sec:1}

\quad Before proceeding with the main results, it is critical to remember the fundamental characteristics of proximity and descriptive proximity spaces.

\subsection{On Proximity Spaces}
\label{sub1}

Consider a pseudo-metric space $(X,d)$. A binary relation $\delta$ defined by
\begin{eqnarray*}
	''E \delta F \ \ \Leftrightarrow \ \ D(E,F) = 0''
\end{eqnarray*}
satisfies 
\begin{eqnarray*}
	&& \textbf{(a)} \hspace*{1.0cm} E \delta F \ \ \Rightarrow \ \ F \delta E,\\
	&& \textbf{(b)} \hspace*{1.0cm} (E \cup F) \delta G \ \ \Leftrightarrow \ \ E \delta G \ \vee \ F \delta G,\\
	&& \textbf{(c)} \hspace*{1.0cm} E \delta F \ \ \Rightarrow \ \ E \neq \emptyset \ \land \ F \neq \emptyset,\\
	&& \textbf{(d)} \hspace*{1.0cm} E \underline{\delta} F \ \ \Rightarrow \ \ \exists G \subset X: E \underline{\delta} G \ \land \ (X-G) \underline{\delta} F,\\
	&& \textbf{(e)} \hspace*{1.0cm} E \cap F \neq \emptyset \ \ \Rightarrow \ \ E \delta F
\end{eqnarray*}
for $D(E,F) = \inf\{d(x_{1},x_{2}): x_{1} \in E, x_{2} \in F\}$\cite{Efremovic:1952,Smirnov:1952,NaimpallyWarrack:1970}.

\quad $\delta$ is a nearness relation and $E \delta F$ is read as “$E$ \textit{is near} $F$''. Otherwise, the notation $E \underline{\delta} F$ means that “$E$ is \textit{far from} $F$''.

\begin{definition}\cite{Efremovic:1952,Smirnov:1952,NaimpallyWarrack:1970}
	The nearness relation $\delta$ for the subsets of $X$ is said to be an \textit{Efremovic proximity} (simply denoted by \textit{EF-proximity} or \textit{proximity}) provided that $\delta$ satisfies \textbf{(a)}-\textbf{(e)}. $(X,\delta)$ is said to be an \textit{EF-proximity} (or \textit{proximity}) \textit{space}.
\end{definition}

\quad As an example of a proximity space, the \textit{discrete proximity} $\delta$ on a (nonempty) set $X$ is defined by “$E \delta F \ \Leftrightarrow \ E \cap F \neq \emptyset$'' for $E$, $F \subset X$. Also, the \textit{indiscrete proximity} $\delta^{'}$ on a (nonempty) set $X$ is given by $E \delta^{'} F$ for any nonempty subsets $E$ and $F$ in $X$. A subset $E$ of $X$ with a proximity $\delta$ is a closed set if “$x \delta E \ \Rightarrow \ x \in E$''. The converse is also valid. Therefore, given a proximity $\delta$ on $X$, a topology $\tau(\delta)$ is defined by the family of complements of all closed sets via Kuratowski closure operator\cite{NaimpallyWarrack:1970}.   

\begin{theorem}\cite{NaimpallyWarrack:1970}
	For a proximity $\delta$ and a topology $\tau(\delta)$ on a set $X$, we have that the closure $\overline{E}$ coincides with $\{x : x \delta E\}$. 
\end{theorem}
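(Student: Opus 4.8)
The plan is to identify the set $A_E := \{x : x \delta E\}$ with the smallest closed set containing $E$, since $\overline{E}$ is by construction the intersection of all closed sets containing $E$ under the Kuratowski operator that generates $\tau(\delta)$. I would verify three facts: (i) $E \subseteq A_E$; (ii) $A_E$ is closed; and (iii) $A_E \subseteq C$ for every closed set $C$ with $E \subseteq C$. Facts (i) and (ii) exhibit $A_E$ as a closed set containing $E$, so $\overline{E} \subseteq A_E$, while (iii) forces $A_E$ into every such set, so $A_E \subseteq \overline{E}$; together they give the asserted equality.

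First I would record a monotonicity principle extracted from axiom \textbf{(b)}: if $A \subseteq B$ and $A \delta C$, then $B \delta C$ (write $B = A \cup B$ and apply \textbf{(b)}); in contrapositive form, $A \subseteq B$ and $B \underline{\delta} C$ yield $A \underline{\delta} C$, and by symmetry \textbf{(a)} the same holds in the other argument. Fact (i) is then immediate from axiom \textbf{(e)}: for $x \in E$ we have $\{x\} \cap E \neq \emptyset$, hence $x \delta E$ and $x \in A_E$. Fact (iii) is equally short: if $C$ is closed and $E \subseteq C$, then any $x \in A_E$ satisfies $x \delta E$, so $x \delta C$ by monotonicity, and closedness of $C$ (that is, $x \delta C \Rightarrow x \in C$) gives $x \in C$.

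The main obstacle is fact (ii), and this is where the Efremovic axiom \textbf{(d)} is essential. I would argue by contraposition: assuming $\{x\} \underline{\delta} E$, I aim to show $\{x\} \underline{\delta} A_E$. Applying \textbf{(d)} to $\{x\} \underline{\delta} E$ produces a set $G$ with $\{x\} \underline{\delta} G$ and $(X - G) \underline{\delta} E$. The key intermediate claim is $A_E \subseteq G$: if some $y \in A_E$ lay in $X - G$, then $\{y\} \subseteq X - G$ and monotonicity applied to $(X-G) \underline{\delta} E$ would force $\{y\} \underline{\delta} E$, contradicting $y \delta E$. With $A_E \subseteq G$ and $\{x\} \underline{\delta} G$ in hand, monotonicity yields $\{x\} \underline{\delta} A_E$, completing the contrapositive and hence showing $x \delta A_E \Rightarrow x \in A_E$. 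Finally, the degenerate case $E = \emptyset$ causes no trouble, since axiom \textbf{(c)} makes $x \delta \emptyset$ always false, so $A_\emptyset = \emptyset = \overline{\emptyset}$. Collecting (i)--(iii), $A_E$ is the least closed set containing $E$, which is exactly $\overline{E}$, so $\overline{E} = \{x : x \delta E\}$.
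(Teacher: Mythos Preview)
Your proof is correct. The paper itself does not supply a proof of this theorem; it is stated as a cited result from Naimpally--Warrack without argument, so there is no proof in the paper to compare against. Your argument follows the standard route one would expect: verifying that $A_E = \{x : x\,\delta\,E\}$ contains $E$ via axiom \textbf{(e)}, is closed via axiom \textbf{(d)} (the Efremovic separation axiom, which is indeed the essential ingredient for idempotence of the closure), and sits inside every closed superset of $E$ via the monotonicity extracted from \textbf{(b)} together with the paper's definition of closed sets. These three facts identify $A_E$ as the smallest closed set containing $E$, hence as $\overline{E}$.
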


\quad Given any proximities $\delta$ and $\delta^{'}$ on respective sets $X$ and $X^{'}$, a map $h$ from $X$ to $X^{'}$ is called \textit{proximally continuous} if “$E \delta F \ \Rightarrow \ h(E) \delta^{'} h(F)$'' for $E$, $F \subset X$\cite{Efremovic:1952,Smirnov:1952}. We denote a proximally continuous map by “pc-map''. Given a proximity $\delta$ on $X$ and a subset $E \subset X$, a \textit{subspace proximity} $\delta_{E}$ is defined on the subsets of $E$ as follows\cite{NaimpallyWarrack:1970}: “$E_{1} \delta E_{2} \ \Leftrightarrow \ E_{1} \delta_{E} E_{2}$'' for $E_{1}$, $E_{2} \subset E$. Let $(X,\delta)$ be a proximity space and $(E,\delta_{E})$ a subspace proximity. A pc-map $k : (X,\delta) \rightarrow (E,\delta_{E})$ is a \textit{proximal retraction} provided that $k \circ j$  is an identity map on $1_{E}$, where $j : (E,\delta_{E}) \rightarrow (X,\delta)$ is an inclusion map.

\begin{lemma}\label{l1}\cite{PetersTane:2021}(Gluing Lemma)
	Assume that $f_{1} : (X^{'},\delta_{1}^{'}) \rightarrow (Y^{'},\delta_{2}^{'})$ and \linebreak $f_{2} : (X^{''},\delta_{1}^{''}) \rightarrow (Y^{'},\delta_{2}^{'})$ are pc-maps with the property that they agree on the intersection of $X$ and $X^{''}$. Then the map $f_{1} \cup f_{2} : (X^{'} \cup X^{''},\delta) \rightarrow (Y^{'},\delta_{2}^{'})$, defined by $f_{1} \cup f_{2}(s) = \begin{cases}
		f_{1}(s), & s \in X^{'} \\
		f_{2}(s), & s \in X^{''}
	\end{cases}$ for any $s \in X^{'} \cup X^{''}$, is a pc-map.
\end{lemma}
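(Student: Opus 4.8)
The plan is to verify the defining implication of a pc-map for $g := f_{1} \cup f_{2}$ directly, namely that $E \delta F$ forces $g(E) \, \delta_{2}^{'} \, g(F)$ for all $E, F \subseteq X^{'} \cup X^{''}$; throughout I read $\delta_{1}^{'}$ and $\delta_{1}^{''}$ as the subspace proximities that $\delta$ induces on $X^{'}$ and $X^{''}$. First I would note that $g$ is well defined precisely because $f_{1}$ and $f_{2}$ agree on $X^{'} \cap X^{''}$, and that for every $A \subseteq X^{'} \cup X^{''}$ one has $g(A) = f_{1}(A \cap X^{'}) \cup f_{2}(A \cap X^{''})$, since $g$ restricts to $f_{1}$ on $X^{'}$ and to $f_{2}$ on $X^{''}$.

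Writing $E^{'} = E \cap X^{'}$, $E^{''} = E \cap X^{''}$ and likewise $F^{'}, F^{''}$, I would apply axiom \textbf{(b)} to both arguments to obtain
\[
E \delta F \ \Leftrightarrow \ (E^{'} \delta F^{'}) \vee (E^{'} \delta F^{''}) \vee (E^{''} \delta F^{'}) \vee (E^{''} \delta F^{''}),
\]
and then argue case by case. The two \emph{aligned} disjuncts are routine. If $E^{'} \delta F^{'}$, then since $E^{'}, F^{'} \subseteq X^{'}$ the subspace identification gives $E^{'} \, \delta_{1}^{'} \, F^{'}$; proximal continuity of $f_{1}$ yields $f_{1}(E^{'}) \, \delta_{2}^{'} \, f_{1}(F^{'})$; and because $f_{1}(E^{'}) \subseteq g(E)$ and $f_{1}(F^{'}) \subseteq g(F)$, the monotonicity contained in \textbf{(b)}, together with the symmetry \textbf{(a)}, promotes this to $g(E) \, \delta_{2}^{'} \, g(F)$. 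The disjunct $E^{''} \delta F^{''}$ is identical with $f_{2}$ and $\delta_{1}^{''}$ in place of $f_{1}$ and $\delta_{1}^{'}$.

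The step I expect to be the main obstacle is the pair of \emph{mixed} disjuncts, say $E^{'} \delta F^{''}$ with $E^{'} \subseteq X^{'}$ and $F^{''} \subseteq X^{''}$, because neither map alone sees both sets and nothing in the hypotheses on $f_{1}$ or $f_{2}$ applies directly. My approach would be to show that cross-nearness is necessarily \emph{witnessed on the overlap} $X^{'} \cap X^{''}$: if $E^{'} \cap F^{''} \neq \emptyset$ the conclusion is immediate from \textbf{(e)} and the fact that $f_{1} = f_{2}$ there, while in general I would try to produce a contact point $p \in X^{'} \cap X^{''}$ with $p \in \overline{E^{'}} \cap \overline{F^{''}}$, so that $g(p) = f_{1}(p) = f_{2}(p)$ lies in the closures of both $g(E)$ and $g(F)$ and forces them near.

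The delicate point is that such a contact point need not exist for an arbitrary $\delta$ on $X^{'} \cup X^{''}$, since two sets can be $\delta$-near while their closures are disjoint; hence the mixed disjuncts cannot be dismissed without using the specific way $\delta$ is built from the cover. Concretely, if $\delta$ is the gluing proximity with no cross terms, i.e. $A \delta B \Leftrightarrow (A \cap X^{'}) \, \delta_{1}^{'} \, (B \cap X^{'}) \vee (A \cap X^{''}) \, \delta_{1}^{''} \, (B \cap X^{''})$, then the mixed disjuncts are vacuous and the two aligned cases finish the proof; otherwise one must impose enough on the cover (for instance that $X^{'}, X^{''}$ are proximal retracts meeting along a closed overlap, as when gluing homotopies along overlapping closed subintervals) to guarantee the contact point. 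I would therefore concentrate the effort on pinning down the intended $\delta$ on the union and confirming that it suppresses the cross-nearness, after which the remaining verifications are the mechanical applications of \textbf{(a)}, \textbf{(b)}, and the proximal continuity of $f_{1}$ and $f_{2}$ sketched above.
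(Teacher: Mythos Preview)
The paper does not supply its own proof of this lemma: it is stated with a citation to \cite{PetersTane:2021} and no argument is given in the text. There is therefore nothing in the paper to compare your approach against.

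That said, your analysis is sharper than a routine write-up would be, because you have correctly isolated the real content of the statement. The aligned cases $E' \delta F'$ and $E'' \delta F''$ go through exactly as you describe, by the subspace identification and axioms \textbf{(a)}--\textbf{(b)}. Your caution about the mixed disjuncts is well founded: as the lemma is stated, the proximity $\delta$ on $X' \cup X''$ is not specified beyond the requirement that it restrict to $\delta_{1}'$ and $\delta_{1}''$, and for a general such $\delta$ the implication $E' \delta F'' \Rightarrow g(E) \, \delta_{2}' \, g(F)$ can indeed fail. Your observation that the ``gluing proximity with no cross terms'' makes the mixed cases vacuous, and that this is the $\delta$ one actually wants when pasting proximal homotopies along overlapping closed intervals, is exactly the hypothesis the applications in this paper use (cf.\ the uses of Lemma~\ref{l1} in the cofibration section). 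So your proof is complete under the reading of $\delta$ that the paper intends, and your identification of the unstated assumption is a genuine clarification rather than a gap on your side.
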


\quad We say that $h$ is a \textit{proximity isomorphism} provided that $h$ is a bijection and each of $h$ and $h^{-1}$ is pc-map\cite{NaimpallyWarrack:1970}. According to this, $(X,\delta)$ and $(X^{'},\delta^{'})$ are said to be \textit{proximally isomorphic spaces}. Another important proximity relation is given on the subsets of the cartesian product of two proximity spaces as follows\cite{Leader:1964}: Let $\delta$ and $\delta^{'}$ be any proximities on respective sets $X$ and $X^{'}$. For any subsets $E_{1} \times E_{2}$ and $F_{1} \times F_{2}$ of $X \times X^{'}$, $E_{1} \times E_{2}$ \textit{is near} $F_{1} \times F_{2}$ if $E_{1} \delta F_{1}$ and $E_{2} \delta^{'} F_{2}$.

\begin{definition}\label{d8}\cite{PetersTane:2021}
	Given two pc-maps $h_{1}$ and $h_{2}$ from $X$ to $X^{'}$, if there is a pc-map $F$ from $X \times I$ to $X^{'}$ with the properties $F(x,0) = h_{1}(x)$ and $F(x,1) = h_{2}(x)$, then $h_{1}$ and $h_{2}$ are called \textit{proximally homotopic maps}. 
\end{definition}

\quad The map $F$ in Definition \ref{d8} is said to be a \textit{proximal homotopy between $h$ and $h^{'}$}. We simply denote a proximal homotopy by “prox-hom''. Similar to topological spaces, prox-hom is an equivalence relation on proximity spaces. Let $\delta$ be a proximity on $X$ and $E \subset X$. $E$ is called a \textit{$\delta-$neighborhood of $F$}, denoted by $F \ll_{\delta} E$, provided that $F \underline{\delta} (X-E)$\cite{NaimpallyWarrack:1970}. The proximal continuity of any function $h : (X,\delta) \rightarrow (X^{'},\delta^{'})$ can also be expressed as 
\begin{eqnarray*}
	''E \ll_{\delta^{'}} F \ \Rightarrow \ h^{-1}(E) \ll_{\delta} h^{-1}(F)''
\end{eqnarray*} 
for any $E$, $F \subset X'$.

\begin{theorem}\cite{NaimpallyWarrack:1970}
	Let $E_{k} \ll_{\delta} F_{k}$ for $k = 1,\cdots,r$. Then
	\begin{eqnarray*}
		\displaystyle\bigcap_{k=1}^{r}E_{k} \ll_{\delta} \displaystyle\bigcap_{k=1}^{r}F_{k} \ \ \ \text{and} \ \ \ \displaystyle\bigcup_{k=1}^{r}E_{k} \ll_{\delta} \displaystyle\bigcup_{k=1}^{r}F_{k}.
	\end{eqnarray*}
\end{theorem}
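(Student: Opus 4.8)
The plan is to reduce everything to the case $r=2$ by a straightforward induction and to translate the relation $\ll_{\delta}$ back into the far relation $\underline{\delta}$ via its definition: recall that $A \ll_{\delta} B$ means precisely $A \underline{\delta} (X-B)$. Thus $E_{k} \ll_{\delta} F_{k}$ reads $E_{k} \underline{\delta} (X-F_{k})$, and, using De Morgan's laws, the two conclusions become
\begin{eqnarray*}
	\bigcap_{k=1}^{r} E_{k} \; \underline{\delta} \; \bigcup_{k=1}^{r}(X-F_{k}) \quad\text{and}\quad \bigcup_{k=1}^{r} E_{k} \; \underline{\delta} \; \bigcap_{k=1}^{r}(X-F_{k}).
\end{eqnarray*}

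The key auxiliary fact I would establish first is a monotonicity property of the far relation: if $A \underline{\delta} B$ and $A^{'} \subseteq A$, $B^{'} \subseteq B$, then $A^{'} \underline{\delta} B^{'}$. To see this for the first argument, write $A = A^{'} \cup (A - A^{'})$; axiom \textbf{(b)} gives $A \delta B \Leftrightarrow A^{'} \delta B \vee (A-A^{'}) \delta B$, so its contrapositive yields $A \underline{\delta} B \Rightarrow A^{'} \underline{\delta} B$. Since axiom \textbf{(a)} makes $\delta$ (hence $\underline{\delta}$) symmetric, applying the same argument to the second argument then gives $A^{'} \underline{\delta} B^{'}$. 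This monotonicity, together with the contrapositive form of \textbf{(b)}, namely $(A \cup B) \underline{\delta} G \Leftrightarrow A \underline{\delta} G \wedge B \underline{\delta} G$, is all the machinery needed.

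For the intersection case with $r=2$, set $G = E_{1} \cap E_{2}$. Since $G \subseteq E_{1}$ and $E_{1} \underline{\delta} (X-F_{1})$, monotonicity gives $G \underline{\delta} (X-F_{1})$; likewise $G \underline{\delta} (X-F_{2})$. The contrapositive of \textbf{(b)} (applied on the right argument after using \textbf{(a)}) then forces $G \underline{\delta} \big[(X-F_{1})\cup(X-F_{2})\big] = X - (F_{1} \cap F_{2})$, which is exactly $E_{1} \cap E_{2} \ll_{\delta} F_{1} \cap F_{2}$. For the union case, set $H = (X-F_{1})\cap(X-F_{2}) = X-(F_{1} \cup F_{2})$. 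Since $H \subseteq X-F_{1}$ and $E_{1} \underline{\delta}(X-F_{1})$, monotonicity gives $E_{1} \underline{\delta} H$, and similarly $E_{2} \underline{\delta} H$; the contrapositive of \textbf{(b)} then yields $(E_{1} \cup E_{2}) \underline{\delta} H = X-(F_{1} \cup F_{2})$, i.e. $E_{1} \cup E_{2} \ll_{\delta} F_{1} \cup F_{2}$. An induction on $r$ upgrades both to the stated finite intersections and unions.

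The only genuinely delicate point --- and the step I would watch most carefully --- is bookkeeping the De Morgan passage between complements of intersections and unions while keeping track of which argument of $\delta$ carries the union, so that axiom \textbf{(b)} is invoked on the correct side (using symmetry \textbf{(a)} whenever the union sits on the right). No other axiom is required, so I expect no structural obstacle beyond this careful matching of the set-theoretic identities with the contrapositive of \textbf{(b)}.
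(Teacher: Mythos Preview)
Your argument is correct and is the standard one: translate $\ll_{\delta}$ back to $\underline{\delta}$, use the monotonicity of $\underline{\delta}$ in each argument (which you derive cleanly from the contrapositive of axiom \textbf{(b)} together with symmetry \textbf{(a)}), and then apply the contrapositive of \textbf{(b)} once more on the side carrying the union. The De Morgan bookkeeping is handled correctly in both the intersection and union cases, and the induction to general $r$ is immediate.

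As for comparison with the paper: the paper does not supply a proof of this theorem at all --- it merely quotes the statement from Naimpally and Warrack's monograph. Your proof is precisely the argument one finds there (and in most treatments of EF-proximities), so there is nothing to contrast.
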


\begin{definition}\label{d7}\cite{PetersTane:2021}
	For any two elements $x_{1}$ and $x_{2}$ in $X$ with a proximity $\delta$, a \textit{proximal path from $x_{1}$ to $x_{2}$} in $X$ is a pc-map $h$ from $I = [0,1]$ to $X$ for which $h(0) = x_{1}$ and $h(1) = x_{2}$.
\end{definition}

\quad The proximal continuity of the proximal path $h : I \rightarrow X$ in Definition \ref{d7} means that “$D(E,F) = 0 \ \Rightarrow \ h(E) \delta h(F)$'' for $E$, $F \in I$. Recall that $X$ is a \textit{connected proximity space} if and only if for all nonempty $E$, $F \in \mathcal{P}(X)$, $E \cup F = X$ implies that $E \delta F$\cite{MrowkaPervin:1964}. Let $\delta$ be a proximity on $X$. Then $X$ is called a \textit{path-connected proximity space} if, for any points $x_{1}$ and $x_{2}$ in $X$, there exists a proximal path from $x_{1}$ to $x_{2}$ in $X$.

\begin{lemma}\label{l2}
	Proximal path-connectedness implies proximal connectedness as in the same as topological spaces. 
\end{lemma}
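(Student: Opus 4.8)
The plan is to prove the statement by contradiction, mirroring the classical topological argument while replacing open-set separations by the far relation $\underline{\delta}$. Assume $(X,\delta)$ is path-connected but \emph{not} connected. By the definition of a connected proximity space, there then exist nonempty $E,F \in \mathcal{P}(X)$ with $E \cup F = X$ and $E \underline{\delta} F$. Choosing $x_{1} \in E$ and $x_{2} \in F$, path-connectedness furnishes a proximal path $h : I \to X$ with $h(0)=x_{1}$ and $h(1)=x_{2}$, where $I=[0,1]$ carries its metric proximity $\delta_{I}$. The goal is to transport the separation $\{E,F\}$ of $X$ back along $h$ to a separation of $I$, contradicting the connectedness of $I$.

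The key step is the bookkeeping between the far relation and the $\delta$-neighbourhood relation $\ll_{\delta}$. Since $E \cup F = X$, the far relation $E \underline{\delta} F$ is equivalent, via $A \ll_{\delta} B \Leftrightarrow A \underline{\delta}(X-B)$, to $E \ll_{\delta}(X-F)$. Applying the $\ll$-characterisation of proximal continuity of $h$ (namely $A \ll_{\delta} B \Rightarrow h^{-1}(A) \ll_{\delta_{I}} h^{-1}(B)$) together with $h^{-1}(X-F)=I-h^{-1}(F)$, I obtain $h^{-1}(E) \ll_{\delta_{I}} \bigl(I-h^{-1}(F)\bigr)$, which unwinds to $h^{-1}(E) \underline{\delta_{I}} h^{-1}(F)$. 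Moreover $h^{-1}(E)$ and $h^{-1}(F)$ are nonempty (they contain $0$ and $1$ respectively) and satisfy $h^{-1}(E) \cup h^{-1}(F) = h^{-1}(X) = I$. Thus $\{h^{-1}(E),h^{-1}(F)\}$ witnesses that $I$ is disconnected, the desired contradiction.

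The main obstacle, and the ingredient that justifies the phrase ``as in the same as topological spaces'', is establishing that $I$ is itself a connected proximity space. I would prove this by a short import from metric topology: if $I = A \cup B$ with $A,B$ nonempty and $A \underline{\delta_{I}} B$, then $D(A,B) = \varepsilon > 0$, whence $\overline{A} \cap \overline{B} = \emptyset$, since any common point would force points of $A$ and $B$ to be arbitrarily close and hence $D(A,B)=0$. As $A \cup B = I$ forces $\overline{A} \cup \overline{B} = I$, the pair $\{\overline{A},\overline{B}\}$ splits $[0,1]$ into two disjoint nonempty closed sets, contradicting the ordinary connectedness of $[0,1]$. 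The only points demanding care are this metric lemma and the complement bookkeeping of the previous paragraph; once $I$ is known to be connected, the pullback argument is routine.
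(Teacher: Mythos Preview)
Your proof is correct and follows essentially the same contradiction argument as the paper: assume a far decomposition $E\cup F=X$, pull it back along a proximal path to obtain a far decomposition of $I$, and contradict the proximal connectedness of $[0,1]$. You are in fact more careful than the paper, explicitly justifying $h^{-1}(E)\,\underline{\delta_I}\,h^{-1}(F)$ via the $\ll_\delta$-characterisation of proximal continuity and supplying a proof that $I$ is proximally connected, both of which the paper simply asserts.
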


\begin{proof}
	Let $\delta$ be a path-connected proximity on $X$. Suppose that $(X,\delta)$ is not proximally connected. Then there exists two nonempty subsets $E$, $F$ in $X$ such that $E \cup F = X$ and $E \underline{\delta} F$. Since $X$ is proximally path-connected, there is a pc-map $h : [0,1] \rightarrow X$ with $h(0) = E$ and $h(1) = F$. Consider the subsets $h^{-1}(E)$ and $h^{-1}(F) \in I$. They are nonempty sets because \linebreak $0 \in h^{-1}(E)$ and $1 \in h^{-1}(F)$. Their union is $[0,1]$, and by the proximal continuity of $h$, $h^{-1}(E) \underline{\delta} h^{-1}(F)$. This contradicts with the fact that $[0,1]$ is proximally connected. Finally, $X$ is proximally connected.
\end{proof}

\begin{theorem}\label{t2}
	Proximal path-connectedness coincides with proximal connectedness.
\end{theorem}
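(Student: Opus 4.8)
The plan is to prove the two implications separately and then combine them. One direction---proximal path-connectedness implies proximal connectedness---is already recorded in Lemma \ref{l2}, so the real work lies in the converse: if $(X,\delta)$ is proximally connected, then it is proximally path-connected. I would organize the converse around the notion of a path-component and try to show that it cannot have a nonempty complement.

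First I would establish the basic building block that two near points can always be joined by a proximal path. Given $x_{1} \delta x_{2}$, define $h : I \to X$ by $h(t) = x_{1}$ for $t \in [0,1)$ and $h(1) = x_{2}$. Verifying proximal continuity (Definition \ref{d7}) amounts to checking that $D(E,F) = 0$ forces $h(E) \delta h(F)$ for all $E, F \subseteq I$; whenever the two sets accumulate at a common point below $1$ they share the value $x_{1}$, so nearness follows from axiom \textbf{(e)}, and the only remaining case is when one set accumulates at $1$ while the other contains $1$, where the required nearness $\{x_{1}\} \delta \{x_{2}\}$ is exactly the hypothesis. Next I would show that concatenation of proximal paths is again a proximal path: reparametrize two given paths on $[0,1/2]$ and $[1/2,1]$ and glue them with the Gluing Lemma (Lemma \ref{l1}), whose hypotheses hold because the halves agree at $t = 1/2$.

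With these tools in hand, fix $x_{1} \in X$ and set $E = \{x \in X : \text{there is a proximal path from } x_{1} \text{ to } x\}$ and $F = X \setminus E$. The constant path gives $x_{1} \in E$, so $E \neq \emptyset$ and $E \cup F = X$. Combining the building block with concatenation shows that $E$ is saturated under point-nearness: if $x \in E$ and $x \delta y$, then composing a path from $x_{1}$ to $x$ with the two-point path from $x$ to $y$ places $y$ in $E$. Hence no point of $F$ is near any point of $E$. If this can be promoted to the set-level statement $E \underline{\delta} F$, then the assumption $F \neq \emptyset$ contradicts proximal connectedness, which forbids writing $X$ as a union of two nonempty far sets; thus $F = \emptyset$, every point is path-reachable from $x_{1}$, and $X$ is proximally path-connected. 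Together with Lemma \ref{l2} this yields the claimed coincidence.

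The main obstacle is exactly the promotion step from pointwise separation to the set-nearness statement $E \underline{\delta} F$. Point-nearness and set-nearness do not coincide in a general proximity space---one may have $A \delta B$ with no pair $a \in A$, $b \in B$ satisfying $\{a\} \delta \{b\}$---so the absence of a near pair between $E$ and $F$ does not by itself deliver $E \underline{\delta} F$. Closing this gap is where the Efremovic axioms must enter essentially: I would attempt to feed the separation into the strong axiom \textbf{(d)} to produce a set $G$ realizing $E \ll_{\delta} (X - G)$ and $F \ll_{\delta} G$, and then argue that the nearness-saturation of $E$ forces such a separation to exist. This is the delicate heart of the argument and the step most in need of care, since it is precisely the place where a careless identification of the two kinds of nearness would break the proof.
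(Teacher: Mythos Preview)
Your proposal takes a route quite different from the paper's, and the gap you yourself flag in the final paragraph is genuine and not repairable along the lines you sketch. The Efremovic axiom \textbf{(d)} runs in the wrong direction for your purposes: it takes as \emph{input} the relation $E \underline{\delta} F$ and outputs a separating set $G$, whereas you need to \emph{deduce} $E \underline{\delta} F$ from the weaker pointwise information that no $e \in E$, $f \in F$ satisfy $\{e\}\delta\{f\}$. That deduction fails in general proximity spaces---in $\mathbb{R}$ with the metric proximity, $E=\{1/n:n\geq 1\}$ and $F=\{0\}$ have $E\delta F$ while no individual pair of points is near---so the point-nearness saturation you establish for the path-component gives no leverage toward a contradiction. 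Indeed, proximal connectedness \emph{asserts} $E\delta(X\setminus E)$ whenever both pieces are nonempty, so your saturation property is perfectly compatible with, rather than in tension with, the hypothesis you are trying to exploit.

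By contrast, the paper does not use path-components, your two-point step paths, or concatenation at all. It argues directly: assuming some $x,y$ admit no proximal path, it chooses a single map $h:[0,1]\to X$ with $h(0)=x$, $h(1)=y$ and $h((0,1])=X\setminus\{x\}$, and reads off from the failure of proximal continuity of this specific $h$ at the specific pair $E=\{0\}$, $F=(0,1]$ that $\{x\}\underline{\delta}(X\setminus\{x\})$, contradicting connectedness. So the paper manufactures a concrete witness map and extracts a set-level far relation from its non-continuity, rather than trying to promote pointwise information about an abstractly defined subset. If you wish to align with the paper, drop the path-component strategy and work with an explicit candidate $h$; your building-block and gluing lemmas, while correct, play no role in that argument.
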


\begin{proof}
	Given a proximity $\delta$ on $X$, by Lemma \ref{l2}, it is enough to prove that any connected proximity space is a path-connected proximity space. Suppose that $X$ is not a path-connected proximity space. Then any map $h : ([0,1],\delta^{'}) \rightarrow (X,\delta)$ with $h(0) = x$ and $h(1) = y$ is not proximally continuous, i.e., if $E \delta^{'} F$ for all $E$, $F \in [0,1]$, then $h(E) \underline{\delta} h(F)$. Take $E = \{0\} \subset I$ and $F = (0,1] \subset I$. Since $D(E,F) = \inf\{d(0,z) : z \in F\} = 0$, we have that $E \delta F$. It follows that $h(E) = \{x\}$ is not near $h(F) = X \setminus \{x\}$. On the other hand,
	\begin{eqnarray*}
		h(E) \cup h(F) = \{x\} \cup X \setminus \{x\} = X.
	\end{eqnarray*}
	Thus, $X$ is not proximally connected and this is a contradiction.
\end{proof}

\subsection{On Descriptive Proximity Spaces}
\label{sub2}

Assume that $X$ is a nonempty set and $x \in X$. Consider the set $\Phi = \{\phi_{1},\cdots,\phi_{m}\}$ of maps (generally named as probe functions) $\phi_{j} : X \rightarrow \mathbb{R}$, $j = 1,\cdots,m$, such that $\phi_{j}(x)$ denotes a feature value of $x$. Let $E \subset X$. Then the set of descriptions of a point $e$ in $E$, denoted by $\mathcal{Q}(E)$, is given by the set $\{\Phi(e) : e \in E\}$, where $\Phi(e)$ (generally called a feature vector for $e$) equals $(\phi_{1}(e),\cdots,\phi_{m}(e))$. For $E$, $F \subset X$, the binary relation $\delta_{\Phi}$ is defined by 
\begin{eqnarray}
	''E \delta_{\Phi} F \ \Leftrightarrow \ \mathcal{Q}(E) \cap \mathcal{Q}(F) \neq \emptyset'',
\end{eqnarray}
and $E \delta_{\Phi} F$ is read as “$E$ is \textit{descriptively near} $F$''\cite{Peters1:2007,Peters2:2007,Peters:2013}. Also, $E \underline{\delta_{\Phi}} F$ is often used to state “$E$ is \textit{descriptively far from} $F$''. The \textit{descriptive intersection of $E$ and $F$} and the \textit{descriptive union of $E$ and $F$} are defined by
\begin{eqnarray*}
	E \displaystyle \bigcap_{\Phi} F = \{x \in E \cup F : \Phi(x) \in \mathcal{Q}(E) \ \land \ \Phi(x) \in \mathcal{Q}(F)\},
\end{eqnarray*}
and
\begin{eqnarray*}
	E \displaystyle \bigcup_{\Phi} F = \{x \in E \cup F : \Phi(x) \in \mathcal{Q}(E) \ \vee  \ \Phi(x) \in \mathcal{Q}(F)\},
\end{eqnarray*}
respectively\cite{Peters:2013}.

A binary relation $\delta_{\Phi}$ defined by (1) \cite{NaimpallyPeters:2013}
satisfies
\begin{eqnarray*}
	&&\textbf{(f)} \hspace*{1.0cm} E \delta_{\Phi} F \ \ \Rightarrow \ \ E \neq \emptyset \ \land F \neq \emptyset,\\
	&&\textbf{(g)} \hspace*{1.0cm} E \displaystyle \bigcap_{\Phi} F \neq \emptyset \ \ \Rightarrow \ \ E \delta_{\Phi} F,\\
	&&\textbf{(h)} \hspace*{1.0cm} E \displaystyle \bigcap_{\Phi} F \neq \emptyset \ \ \Rightarrow \ \ F \displaystyle \bigcap_{\Phi} E,\\
	&&\textbf{(i)} \hspace*{1.0cm} E \delta_{\Phi} (F \cup G) \ \ \Leftrightarrow \ \ E \delta_{\Phi} F \ \vee \ E \delta_{\Phi} G,\\
	&&\textbf{(k)} \hspace*{1.0cm} E \underline{\delta_{\Phi}} F \ \ \Rightarrow \ \ \exists G \subset X: E \underline{\delta_{\Phi}} G \ \land \ (X-G) \underline{\delta_{\Phi}} F.
\end{eqnarray*}

\quad $\delta_{\Phi}$ is a descriptive nearness relation.

\begin{definition}\cite{NaimpallyPeters:2013}
	The nearness relation $\delta_{\Phi}$ for the subsets of $X$ is said to be an \textit{descriptive Efremovic proximity} (simply denoted by \textit{descriptive EF-proximity} or \textit{descriptive proximity}) if $\delta_{\Phi}$ satisfies \textbf{(f)}-\textbf{(k)}. $(X,\delta_{\Phi})$ is said to be a \textit{descriptive EF-proximity} (or \textit{descriptive proximity}) \textit{space}.
\end{definition}

\quad A map $h : (X,\delta_{\Phi}) \rightarrow (X,\delta_{\Phi}^{'})$ is called \textit{descriptive proximally continuous} provided that “$E \delta_{\Phi} F \ \Rightarrow \ h(E) \delta_{\Phi}^{'} h(F)$'' for $E$, $F \subset X$\cite{Peters:2014,PetersTane:2021}. We denote a descriptive proximally continuous map by “dpc-map''. Let $\delta_{\Phi}$ be a descriptive proximity on $X$, and $E \subset X$ a subset. Then a \textit{descriptive subspace proximity} $\delta_{\Phi}^{E}$ is defined on the subsets of $E$ as follows: 
\begin{eqnarray*}
	''E_{1} \delta_{\Phi} E_{2} \ \Leftrightarrow \ E_{1} \delta_{\Phi}^{E} E_{2}''
\end{eqnarray*}
for $E_{1}$, $E_{2} \subset E$. Given a descriptive proximity $\delta_{\Phi}$ on $X$, a descriptive subspace proximity $(E,\delta_{\Phi}^{E})$, and the inclusion $j : (E,\delta_{\Phi}^{E}) \rightarrow (X,\delta_{\Phi})$, a dpc-map \linebreak$k : (X,\delta_{\Phi}) \rightarrow (E,\delta_{\Phi}^{E})$ is called a \textit{descriptive proximal retraction} if $k \circ j = 1_{E}$.

\begin{lemma}\label{l3}\cite{PetersTane:2021}(Gluing Lemma)
	Assume that $f_{1} : (X^{'},\delta_{\Phi_{1}}^{'}) \rightarrow (Y^{'},\delta_{\Phi_{2}}^{'})$ and $f_{2} : (X^{''},\delta_{\Phi_{1}}^{''}) \rightarrow (Y^{'},\delta_{\Phi_{2}}^{'})$ are two dpc-maps with the property that they agree on the intersection of $X^{'}$ and $X^{''}$. Then the map $f_{1} \cup f_{2}$ from $(X^{'} \cup X^{''},\delta_{\Phi})$ to $(Y^{'},\delta_{\Phi_{2}}^{'})$, defined by $f_{1} \cup f_{2}(s) = \begin{cases}
		f_{1}(s), & s \in X^{'} \\
		f_{2}(s), & s \in X^{''}
	\end{cases}$ for any $s \in X^{'} \cup X^{''}$, is a dpc-map.
\end{lemma}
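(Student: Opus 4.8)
The plan is to verify directly the defining condition of a dpc-map: for arbitrary $E, F \subseteq X^{'} \cup X^{''}$ with $E \,\delta_{\Phi}\, F$, I must establish $(f_{1} \cup f_{2})(E) \,\delta_{\Phi_{2}}^{'}\, (f_{1} \cup f_{2})(F)$. First I would split both arguments along the cover $\{X^{'}, X^{''}\}$, writing $E = (E \cap X^{'}) \cup (E \cap X^{''})$ and $F = (F \cap X^{'}) \cup (F \cap X^{''})$. Symmetry of $\delta_{\Phi}$ is immediate from its defining equation $E \,\delta_{\Phi}\, F \Leftrightarrow \mathcal{Q}(E) \cap \mathcal{Q}(F) \neq \emptyset$, and applying the additivity axiom \textbf{(i)} in each slot shows that $E \,\delta_{\Phi}\, F$ forces at least one of the four relations $(E \cap X_{i}) \,\delta_{\Phi}\, (F \cap X_{j})$, with $X_{1} = X^{'}$, $X_{2} = X^{''}$ and $i,j \in \{1,2\}$, to hold. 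Since $\mathcal{Q}$ is monotone (if $A \subseteq A^{*}$ then $\mathcal{Q}(A) \subseteq \mathcal{Q}(A^{*})$, hence $A \,\delta_{\Phi}\, B \Rightarrow A^{*} \,\delta_{\Phi}\, B^{*}$), and since $f_{1}(E \cap X^{'})$ and $f_{2}(E \cap X^{''})$ both sit inside $(f_{1} \cup f_{2})(E)$ (similarly for $F$), it suffices to produce a descriptive nearness of the images in each of the four cases.

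The two uniform cases $i = j$ are routine. Suppose, say, $(E \cap X^{'}) \,\delta_{\Phi}\, (F \cap X^{'})$; as both sets lie in $X^{'}$, the definition of the descriptive subspace proximity gives $(E \cap X^{'}) \,\delta_{\Phi_{1}}^{'}\, (F \cap X^{'})$, and descriptive proximal continuity of $f_{1}$ yields $f_{1}(E \cap X^{'}) \,\delta_{\Phi_{2}}^{'}\, f_{1}(F \cap X^{'})$. Monotonicity then upgrades this to $(f_{1} \cup f_{2})(E) \,\delta_{\Phi_{2}}^{'}\, (f_{1} \cup f_{2})(F)$, as required. The case with both pieces in $X^{''}$ is identical, with $f_{2}$ and $\delta_{\Phi_{1}}^{''}$ in place of $f_{1}$ and $\delta_{\Phi_{1}}^{'}$.

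The crux, and the step I expect to be the main obstacle, is the pair of cross cases $i \neq j$, say $(E \cap X^{'}) \,\delta_{\Phi}\, (F \cap X^{''})$, where the two near pieces lie on opposite sides of the cover so that neither $f_{1}$ nor $f_{2}$ alone sees both. Unwinding the definition produces points $e \in E \cap X^{'}$ and $f \in F \cap X^{''}$ with a common feature vector $\Phi(e) = \Phi(f)$, yet there is no reason a priori for $\Phi_{2}(f_{1}(e))$ and $\Phi_{2}(f_{2}(f))$ to coincide. Here I would bring in the hypothesis that $f_{1}$ and $f_{2}$ agree on $X^{'} \cap X^{''}$, arguing as in the spatial Gluing Lemma \ref{l1}: one seeks a witness $p \in X^{'} \cap X^{''}$ carrying the shared description, so that $(f_{1} \cup f_{2})(p) = f_{1}(p) = f_{2}(p)$ is well defined and contributes a common element to $\mathcal{Q}((f_{1} \cup f_{2})(E)) \cap \mathcal{Q}((f_{1} \cup f_{2})(F))$. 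Making this reduction airtight, namely guaranteeing that a cross description is actually attained on the overlap $X^{'} \cap X^{''}$ rather than only separately on each piece, is the delicate point; it is precisely where the agreement condition and the well-definedness of $f_{1} \cup f_{2}$ must be used in full, and I would close it by transplanting the corresponding step of Lemma \ref{l1} to the descriptive setting.
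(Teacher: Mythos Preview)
The paper does not supply its own proof of this lemma: it is stated with a citation to \cite{PetersTane:2021} and left without argument, so there is no in-paper proof to compare your proposal against.

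That said, your proposal has a genuine gap precisely where you flag it. In the cross case $(E \cap X^{'}) \,\delta_{\Phi}\, (F \cap X^{''})$ you obtain witnesses $e \in E \cap X^{'}$ and $f \in F \cap X^{''}$ with $\Phi(e) = \Phi(f)$, and you then hope to locate a point $p \in X^{'} \cap X^{''}$ carrying that same description. Nothing in the hypotheses guarantees such a $p$ exists: two points on opposite sides of the cover can share a feature vector without any point of the overlap realising it. The agreement hypothesis $f_{1}|_{X^{'}\cap X^{''}} = f_{2}|_{X^{'}\cap X^{''}}$ only tells you what happens \emph{at} points of the intersection; it does not manufacture such points. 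Nor does descriptive proximal continuity of $f_{1}$ or $f_{2}$ help, since neither map is defined on both of $e$ and $f$ simultaneously. Your closing sentence, that you would ``transplant the corresponding step of Lemma \ref{l1} to the descriptive setting,'' is not an argument but a promissory note, and the spatial lemma is itself stated in the paper without proof, so there is no concrete step to transplant. To make the cross case go through you would need either an additional structural hypothesis on how $\delta_{\Phi}$ on $X^{'} \cup X^{''}$ relates to $\delta_{\Phi_{1}}^{'}$ and $\delta_{\Phi_{1}}^{''}$, or a direct appeal to the argument in the cited source.
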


\quad $h$ is a \textit{descriptive proximity isomorphism} if $h$ is a bijection and each of $h$ and $h^{-1}$ is dpc-map\cite{NaimpallyWarrack:1970}. Hence, $(X,\delta_{\Phi})$ and $(X^{'},\delta_{\Phi}^{'})$ are called \textit{descriptive proximally isomorphic spaces}. A descriptive proximity relation on the cartesian product of descriptive proximity spaces is defined as follows\cite{Leader:1964}: Assume that $\delta_{\Phi}$ and $\delta_{\Phi}^{'}$ are any descriptive proximities on $X$ and $X^{'}$, respectively. $E \delta_{\Phi} F$ and $E^{'} \delta_{\Phi}^{'} F^{'}$ implies that $E \times E^{'}$ is \textit{descriptively near} $F \times F^{'}$, where $E \times E^{'}$ and $F \times F^{'}$ are any subsets of $X \times X^{'}$.

\begin{definition}\label{d9}\cite{PetersTane:2021}
	Let $h_{1}$, $h_{2} : (X,\delta_{\Phi}) \rightarrow (X^{'},\delta_{\Phi}^{'})$ be any map. Then $h_{1}$ and $h_{2}$ are said to be \textit{descriptive proximally homotopic maps} provided that there exists a dpc-map $G : X \times I \rightarrow X^{'}$ with $G(x,0) = h_{1}(x)$ and $G(x,1) = h_{2}(x)$. 
\end{definition}

\quad In Definition \ref{d9}, $G$ is a \textit{descriptive proximal homotopy between $h_{1}$ and $h_{2}$}. We simply denote a descriptive proximal homotopy by “dprox-hom''. Given a descriptive proximity $\delta_{\Phi}$ on $X$ and a subset $F \subset X$, $F$ is said to be a \textit{$\delta_{\Phi}-$neighborhood of $E$}, denoted by $E \ll_{\delta_{\Phi}} F$, if $E \underline{\delta_{\Phi}} (X-F)$\cite{Peters:2014}. 

\begin{theorem}\cite{NaimpallyWarrack:1970}
	Let $E_{j} \ll_{\delta_{\Phi}} F_{j}$ for $j = 1,\cdots,m$. Then
	\begin{eqnarray*}
		\displaystyle\bigcap_{j=1}^{m}E_{j} \ll_{\delta_{\Phi}} \displaystyle\bigcap_{j=1}^{m}F_{j} \ \ \ \text{and} \ \ \ \displaystyle\bigcup_{j=1}^{m}E_{j} \ll_{\delta_{\Phi}} \displaystyle\bigcup_{j=1}^{m}F_{j}.
	\end{eqnarray*}
\end{theorem}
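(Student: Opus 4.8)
The plan is to reduce to the case $m=2$ by an immediate induction on $m$, and then to argue exactly as in the analogous non-descriptive theorem stated earlier, replacing the axioms \textbf{(a)}--\textbf{(e)} by their descriptive counterparts \textbf{(f)}--\textbf{(k)}. First I would unfold the definition of the descriptive neighborhood relation: $A \ll_{\delta_{\Phi}} B$ means precisely $A \underline{\delta_{\Phi}} (X-B)$. Two elementary facts will be used throughout. The first is that $\delta_{\Phi}$ is symmetric, which is immediate from the defining equivalence $E \delta_{\Phi} F \Leftrightarrow \mathcal{Q}(E) \cap \mathcal{Q}(F) \neq \emptyset$ in (1). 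The second is monotonicity of the descriptive \emph{far} relation: if $A \subseteq B$ and $B \underline{\delta_{\Phi}} C$, then $A \underline{\delta_{\Phi}} C$. I would derive this from axiom \textbf{(i)} by writing $B = A \cup (B-A)$, so that $C \delta_{\Phi} B$ is equivalent to $C \delta_{\Phi} A \vee C \delta_{\Phi} (B-A)$; negating gives the claim (it also follows at once from $\mathcal{Q}(A) \subseteq \mathcal{Q}(B)$).

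For the intersection statement with $m=2$, I would rewrite the target $(E_{1} \cap E_{2}) \ll_{\delta_{\Phi}} (F_{1} \cap F_{2})$ as $(E_{1} \cap E_{2}) \underline{\delta_{\Phi}} \big(X-(F_{1} \cap F_{2})\big)$ and apply the De Morgan identity $X-(F_{1} \cap F_{2}) = (X-F_{1}) \cup (X-F_{2})$. Axiom \textbf{(i)} then reduces the goal to showing $(E_{1} \cap E_{2}) \underline{\delta_{\Phi}} (X-F_{1})$ and $(E_{1} \cap E_{2}) \underline{\delta_{\Phi}} (X-F_{2})$ separately. Each of these follows from monotonicity: since $E_{1} \cap E_{2} \subseteq E_{i}$ and the hypothesis $E_{i} \ll_{\delta_{\Phi}} F_{i}$ gives $E_{i} \underline{\delta_{\Phi}} (X-F_{i})$, we obtain $(E_{1} \cap E_{2}) \underline{\delta_{\Phi}} (X-F_{i})$ for $i=1,2$.

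For the union statement with $m=2$, I would dually rewrite $(E_{1} \cup E_{2}) \ll_{\delta_{\Phi}} (F_{1} \cup F_{2})$ as $(E_{1} \cup E_{2}) \underline{\delta_{\Phi}} \big(X-(F_{1} \cup F_{2})\big)$ and use $X-(F_{1} \cup F_{2}) = (X-F_{1}) \cap (X-F_{2})$. Here I would apply \textbf{(i)} in the first argument, which is legitimate by the symmetry noted above, reducing the goal to $E_{1} \underline{\delta_{\Phi}} \big((X-F_{1}) \cap (X-F_{2})\big)$ and $E_{2} \underline{\delta_{\Phi}} \big((X-F_{1}) \cap (X-F_{2})\big)$. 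Since $(X-F_{1}) \cap (X-F_{2}) \subseteq X-F_{i}$ and $E_{i} \underline{\delta_{\Phi}} (X-F_{i})$, monotonicity again closes both cases, and induction splitting off one factor at a time then yields arbitrary $m$.

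I expect the only genuine subtlety to be the justification of symmetry and monotonicity in the descriptive framework: the listed axioms \textbf{(f)}--\textbf{(k)} do not record symmetry of $\delta_{\Phi}$ explicitly, so I would make a point of deducing it directly from the defining relation (1) rather than treating it as given, and likewise record monotonicity as an explicit consequence of \textbf{(i)}. Once these two observations are in hand, the remaining computation is purely formal.
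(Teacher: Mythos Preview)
Your argument is correct and is the standard route to this result. Note, however, that the paper does not actually supply a proof of this theorem: it is stated in the preliminaries as a citation to Naimpally--Warrack, both in the non-descriptive form and in the descriptive form you address, with no accompanying argument. So there is nothing in the paper to compare your proof against; you have filled in what the paper leaves to the reference. Your careful justification of symmetry from the defining relation (1) and of monotonicity from axiom \textbf{(i)} is exactly the right bookkeeping, since, as you observe, the listed descriptive axioms \textbf{(f)}--\textbf{(k)} do not state symmetry outright.
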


\begin{definition}\label{d10}\cite{PetersTane:2021}
	Let $x_{1}$ and $x_{2}$ be any two elements in $X$ with a descriptive proximity $\delta_{\Phi}$. Then a \textit{descriptive proximal path from $x_{1}$ to $x_{2}$} in $X$ is a dpc-map $h$ from $I = [0,1]$ to $X$ for which $h(0) = x_{1}$ and $h(1) = x_{2}$.
\end{definition}

\quad In Definition \ref{d10}, the fact $h : I \rightarrow X$ is descriptive proximally continuous means that “$D(E,F) = 0 \ \Rightarrow \ h(E) \delta_{\Phi} h(F)$'' for $E$, $F \in I$. A descriptive proximity space $(X,\delta_{\Phi})$ is \textit{connected} if and only if for all nonempty $E$, $F \in \mathcal{P}(X)$, $E \cup F = X$ implies that $E \delta_{\Phi} F$\cite{MrowkaPervin:1964}. A descriptive proximity space $(X,\delta_{\Phi})$ is \textit{path-connected} if, for any points $x_{1}$ and $x_{2}$ in $X$, there exists a descriptive proximal path from $x_{1}$ to $x_{2}$ in $X$.

\begin{theorem}
	In a descriptive proximity space, path-connectedness coincides with connectedness.
\end{theorem}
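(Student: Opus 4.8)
The plan is to establish the two implications separately, mirroring the arguments of Lemma \ref{l2} and Theorem \ref{t2} transported to the descriptive setting. Throughout I would equip $I = [0,1]$ with its metric proximity, so that (as recorded after Definition \ref{d10}) a dpc-map $h : I \to X$ is characterized by ``$D(E,F) = 0 \Rightarrow h(E) \delta_{\Phi} h(F)$'', and I would use that $I$ is connected under this proximity.

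First I would prove that descriptive path-connectedness implies connectedness, exactly paralleling Lemma \ref{l2}. Assume $(X,\delta_{\Phi})$ is descriptively path-connected but not connected. Then there are nonempty $E,F \subset X$ with $E \cup F = X$ and $E \underline{\delta_{\Phi}} F$. Picking $x_{1} \in E$ and $x_{2} \in F$, descriptive path-connectedness supplies a dpc-map $h : I \to X$ with $h(0) = x_{1}$ and $h(1) = x_{2}$. The preimages $h^{-1}(E)$ and $h^{-1}(F)$ are nonempty (they contain $0$ and $1$) and cover $I$. The decisive step is to conclude $h^{-1}(E) \underline{\delta} h^{-1}(F)$ in $I$: if instead these preimages were near, then by descriptive proximal continuity their images would satisfy $h(h^{-1}(E)) \,\delta_{\Phi}\, h(h^{-1}(F))$, and since $h(h^{-1}(E)) \subseteq E$ and $h(h^{-1}(F)) \subseteq F$, the monotonicity packaged in the union axiom \textbf{(i)} would force $E \delta_{\Phi} F$, a contradiction. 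Thus $I$ is split into two nonempty, descriptively far sets whose union is $I$, contradicting the connectedness of $I$.

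For the converse, that descriptive connectedness implies descriptive path-connectedness, I would follow the template of Theorem \ref{t2}. Suppose $X$ is descriptively connected but not descriptively path-connected, so that for some $x_{1}, x_{2} \in X$ no descriptive proximal path joins them. Taking $E = \{0\}$ and $F = (0,1] \subset I$, one has $D(E,F) = 0$, hence $E \delta F$ on $I$; the failure of descriptive proximal continuity then yields $h(E) \underline{\delta_{\Phi}} h(F)$ while $h(E) \cup h(F) = X$, contradicting the descriptive connectedness of $X$ and completing the equivalence.

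The main obstacle is this converse direction. In the purely topological world connectedness does \emph{not} imply path-connectedness, so the argument must genuinely exploit the strong form of proximal connectedness (no partition of $X$ into two far nonempty sets) together with the behaviour of $\delta_{\Phi}$ on $I$. I expect the delicate point to be the logical passage from ``no descriptive proximal path exists'' to a concrete witnessing pair $E,F \subset I$: non-path-connectedness only asserts that \emph{some} near pair in $I$ maps to a far pair in $X$, and one must verify that the specific choice $E = \{0\}$, $F = (0,1]$ indeed produces descriptively far images that cover $X$. By contrast the forward direction is routine once the monotonicity consequence of axiom \textbf{(i)} is invoked.
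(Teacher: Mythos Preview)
Your proposal follows exactly the approach the paper intends: the paper's proof consists of the single line ``Follow the method in the proof of Theorem \ref{t2}'', and you carry out precisely that transport of Lemma \ref{l2} and Theorem \ref{t2} to the descriptive setting. You even correctly flag the logical weak point in the converse direction---the passage from ``no descriptive proximal path exists'' to the specific witnessing pair $E=\{0\}$, $F=(0,1]$ with $h(E)\cup h(F)=X$---which the paper's own Theorem \ref{t2} treats just as loosely.
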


\begin{proof}
	Follow the method in the proof of Theorem \ref{t2}.
\end{proof}

\section{Homotopy Theory on Proximity Spaces}
\label{sec:2}

\quad This section, one of the main parts (Section \ref{sec:2} and Section \ref{sec:3}) of the paper, examines the projection of the homotopy theory elements in parallel with the proximity spaces. First, we start with the notion of proximal mapping spaces. Then we have proximal covering spaces. The last two parts are related to proximal fibrations and its dual notion of proximal cofibrations. Results on these four topics that we believe will be relevant to future proximity space research are presented.

\subsection{Proximal Mapping Spaces}
\label{subsec:1}

The work of mapping spaces in nearness theory starts with \cite{PeiRen:1985} and is still open to improvement. Note that the study of discrete invariants of function spaces is essentially homotopy theory in algebraic topology, and recall that depending on the nature of the spaces, it may be useful to attempt to impose a topology on the space of continuous functions from one topological space to another. One of the best-known examples of this is the compact-open topology.

\begin{definition}
	Let $\delta_{1}$ and $\delta_{2}$ be two proximities on $X$ and $Y$, respectively. The proximal mapping space $Y^{X}$ is defined as $\{\alpha : X \rightarrow Y \ | \ \alpha \ \text{is a pc-map}\}$ having the following proximity relation $\delta$ on itself: Let $E$, $F \subset X$ and $\{\alpha_{i}\}_{i \in I}$ and $\{\beta_{j}\}_{j \in J}$ be any subsets of pc-maps in $Y^{X}$. We say that $\{\alpha_{i}\}_{i \in I} \delta \{\beta_{j}\}_{j \in J}$ if the fact $E \delta_{1} F$ implies that $\alpha_{i}(E) \delta_{2} \beta_{j}(F)$.
\end{definition}

\begin{figure}[h]
	\centering
	\includegraphics[width=0.30\textwidth]{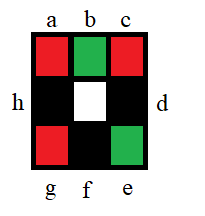}
	\caption{The picture represented by $X = \{a,b,c,d,e,f,g,h\}$.}
	\label{fig:1}
\end{figure}

\begin{example}
	Consider the set $X = \{a,b,c,d,e,f,g,h\}$ of cells in Figure \ref{fig:1} with the proximity $\delta$. Define three proximal paths $\alpha_{1}$, $\alpha_{2}$, and $\alpha_{3} \in X^{I}$ by
	\begin{eqnarray*}
		&&\alpha_{1} : a \mapsto b \mapsto c \mapsto d \mapsto e \mapsto f \mapsto g \mapsto h, \\
		&& \alpha_{2} : h \mapsto a \mapsto b \mapsto c \mapsto d \mapsto e \mapsto f \mapsto g, \\
	    && \alpha_{3} : a \mapsto h \mapsto g \mapsto f \mapsto e \mapsto d \mapsto c \mapsto b.
	\end{eqnarray*}
    For all $t \in I$, $\alpha_{1}(t) \delta \alpha_{2}(t)$. This means that $\alpha_{1}$ is near $\alpha_{2}$. On the other hand, for $t \in [2/8,3/8]$, we have that $\alpha_{1}(t) = c$ and $\alpha_{3}(t) = g$, that is, $\alpha_{1}$ and $\alpha_{3}$ are not near in $X$.
\end{example}

\begin{definition}
	For the proximal continuity of a map $H : (X,\delta_{1}) \rightarrow (Z^{Y},\delta^{'})$, we say that the fact $E \delta_{1} F$ implies that $H(E) \delta^{'} H(F)$ for any subsets $E$, $F \subset X$.
\end{definition} 

\begin{proposition}\label{p1}
	Let $\delta_{1}$, $\delta_{2}$, and $\delta_{3}$ be any proximities on $X$, $Y$, and $Z$, respectively. Then the map $G : (X \times Y,\delta^{''}) \rightarrow (Z,\delta_{3})$ is pc-map if and only if the map \linebreak$H : (X,\delta_{1}) \rightarrow (Z^{Y},\delta^{'})$ defined by $H(E)(F) := G(E \times F)$ is pc-map for $E \subset X$ and $F \subset Y$.
\end{proposition}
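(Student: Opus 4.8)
The plan is to recognize this as the proximal analogue of the exponential law (the currying/uncurrying adjunction) and to prove it by directly unwinding the three nearness relations involved---the Leader product proximity $\delta''$, the mapping-space proximity $\delta'$, and the defining implication of proximal continuity---so that the two continuity hypotheses collapse onto one and the same statement about rectangular subsets. Concretely, I would first record what each side asserts on products. For $G$, proximal continuity applied to the product sets $E \times A$ and $F \times B$ reads: whenever $E \times A \,\delta''\, F \times B$, one has $G(E \times A)\,\delta_{3}\,G(F \times B)$; and by the definition of $\delta''$ on $X \times Y$, the relation $E \times A \,\delta''\, F \times B$ holds exactly when $E \,\delta_{1}\, F$ and $A \,\delta_{2}\, B$. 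For $H$, proximal continuity reads $E \,\delta_{1}\, F \Rightarrow H(E)\,\delta'\,H(F)$, and by the mapping-space proximity on $Z^{Y}$ the relation $H(E)\,\delta'\,H(F)$ means $A \,\delta_{2}\, B \Rightarrow H(E)(A)\,\delta_{3}\,H(F)(B)$ for all $A, F \subset Y$. Substituting $H(E)(A) = G(E \times A)$ and $H(F)(B) = G(F \times B)$ turns this into precisely $[E\,\delta_{1}\,F \ \land \ A\,\delta_{2}\,B] \Rightarrow G(E\times A)\,\delta_{3}\,G(F\times B)$, which is the same condition just extracted for $G$.

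For the forward implication I would assume $G$ is a pc-map and take $E\,\delta_{1}\,F$ in $X$; to verify $H(E)\,\delta'\,H(F)$ I fix any $A\,\delta_{2}\,B$ in $Y$, combine the two nearnesses into $E \times A \,\delta''\, F\times B$ through the product proximity, and apply proximal continuity of $G$ to obtain $G(E\times A)\,\delta_{3}\,G(F\times B)$, that is, $H(E)(A)\,\delta_{3}\,H(F)(B)$. I would also check here that $H$ really lands in $Z^{Y}$, i.e. that each $H(x)\colon Y \to Z$ is a pc-map: this follows by the same computation with $\{x\}\,\delta_{1}\,\{x\}$ (valid by axiom \textbf{(e)}) and $A\,\delta_{2}\,B$, giving $\{x\}\times A \,\delta''\, \{x\}\times B$ and hence $G(\{x\}\times A)\,\delta_{3}\,G(\{x\}\times B)$. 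The converse runs the identical chain of equivalences backwards: assuming $H$ is a pc-map, any near rectangles $E\times A\,\delta''\,F\times B$ decompose as $E\,\delta_{1}\,F$ and $A\,\delta_{2}\,B$, whence $H(E)\,\delta'\,H(F)$ and then $G(E\times A)\,\delta_{3}\,G(F\times B)$.

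The step I expect to require the most care is the reverse direction for \emph{non-rectangular} subsets of $X\times Y$, since $\delta''$ is specified only on products $E_{1}\times E_{2}$, whereas proximal continuity of $G$ must in principle be verified for arbitrary $P, Q \subset X \times Y$. I would handle this by reducing to the rectangular case through the additivity axiom \textbf{(b)}: a near pair of subsets splits over unions of the basic product sets on which $\delta''$ is defined, so that checking the implication on rectangles of the form $E\times A$ and $F\times B$ suffices to recover $G(P)\,\delta_{3}\,G(Q)$ in general. Making this reduction precise---confirming that $\delta''$ is determined by its values on rectangles and that the assignment $H(E)(F):=G(E\times F)$ is well posed---is the only genuinely delicate part of the argument; once it is in place, the equivalence of the two proximal-continuity conditions is immediate from the computation above, and the proposition follows.
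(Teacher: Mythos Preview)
Your argument is essentially identical to the paper's: both directions are handled by directly unwinding the product proximity $\delta''$, the mapping-space proximity $\delta'$, and the defining formula $H(E)(F)=G(E\times F)$, so that each continuity condition becomes the implication $[E\,\delta_{1}\,F \ \land \ A\,\delta_{2}\,B]\Rightarrow G(E\times A)\,\delta_{3}\,G(F\times B)$. You are in fact more careful than the paper in two places---verifying that each $H(x)$ lands in $Z^{Y}$ and flagging the rectangular-versus-arbitrary subset issue in the converse---both of which the paper's proof simply passes over.
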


\begin{proof}
	Assume that $E_{1} \delta_{1} F_{1}$ for $E_{1}$, $F_{1} \subset X$. If $E_{2} \delta_{2} F_{2}$ for $E_{2}$, $F_{2} \subset Y$, then we find $(E_{1} \times E_{2}) \delta^{''} (F_{1} \times F_{2})$. Since $G$ is a pc-map, we get $G(E_{1} \times E_{2}) \delta_{3} G(F_{1} \times F_{2})$. It follows that $H(E_{1})(E_{2}) \delta_{3} H(F_{1})(F_{2})$. This shows that $H(E_{1}) \delta^{'} H(F_{1})$, i.e., $H$ is a pc-map. Conversely, assume that $(E_{1} \times E_{2}) \delta^{''} (F_{1} \times F_{2})$. Then we get $E_{1} \delta_{1} F_{1}$ in $X$ and $E_{2} \delta_{2} F_{2}$ in $Y$. Since $H$ is a pc-map, we get $H(E_{1}) \delta^{'} H(F_{1})$. So, we have that $H(E_{1})(E_{2}) \delta_{3} H(F_{1})(F_{2})$. This leads to the fact that $G(E_{1} \times E_{2}) \delta^{''} G(F_{1} \times F_{2})$, namely that, $G$ is a pc-map. 
\end{proof}

\begin{theorem}
	Let $\delta_{1}$, $\delta_{2}$, and $\delta_{3}$ be any proximities on $X$, $Y$, and $Z$, respectively. Then $(Z^{X \times Y},\delta_{4})$ and $((Z^{Y})^{X},\delta_{5})$ are proximally isomorphic spaces. 
\end{theorem}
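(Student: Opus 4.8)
The plan is to realize the claimed isomorphism as the proximal incarnation of the exponential (currying) law, using Proposition \ref{p1} as the engine. First I would define the underlying set map $\Psi : Z^{X \times Y} \rightarrow (Z^{Y})^{X}$ by sending a pc-map $G : X \times Y \rightarrow Z$ to its curried form $H = \Psi(G)$, where $H(E)(F) := G(E \times F)$ for $E \subset X$, $F \subset Y$, exactly as in the statement of Proposition \ref{p1}. That proposition guarantees that $H$ is indeed a pc-map from $(X,\delta_{1})$ to $(Z^{Y},\delta')$, so $\Psi$ really lands in $(Z^{Y})^{X}$; conversely, the uncurrying assignment $H \mapsto G$ with $G(E \times F) := H(E)(F)$ produces a pc-map $X \times Y \rightarrow Z$ by the other direction of Proposition \ref{p1}. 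Since currying and uncurrying are mutually inverse on the level of ordinary functions, $\Psi$ is a bijection whose inverse $\Psi^{-1}$ is the uncurrying map. This settles the set-theoretic half of ``proximity isomorphism.''

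Next I would write out the two mapping-space proximities explicitly. By the definition of the proximal mapping space, $\{G_{i}\} \, \delta_{4} \, \{G'_{j}\}$ means that whenever $(E_{1} \times E_{2}) \, \delta'' \, (F_{1} \times F_{2})$ we have $G_{i}(E_{1} \times E_{2}) \, \delta_{3} \, G'_{j}(F_{1} \times F_{2})$ for all $i,j$, where $\delta''$ is the product proximity on $X \times Y$, so that $(E_{1} \times E_{2}) \, \delta'' \, (F_{1} \times F_{2})$ holds precisely when $E_{1} \delta_{1} F_{1}$ and $E_{2} \delta_{2} F_{2}$. On the other side, $\{H_{i}\} \, \delta_{5} \, \{H'_{j}\}$ means $E_{1} \delta_{1} F_{1} \Rightarrow H_{i}(E_{1}) \, \delta' \, H'_{j}(F_{1})$, and, unwinding the mapping-space proximity $\delta'$ on $Z^{Y}$, the relation $H_{i}(E_{1}) \, \delta' \, H'_{j}(F_{1})$ means $E_{2} \delta_{2} F_{2} \Rightarrow H_{i}(E_{1})(E_{2}) \, \delta_{3} \, H'_{j}(F_{1})(F_{2})$.

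The crux is then to show that $\Psi$ and $\Psi^{-1}$ are pc-maps, i.e.\ that $\Psi$ carries $\delta_{4}$-near families to $\delta_{5}$-near families and back. Here I would simply substitute the defining identity $H_{i}(E_{1})(E_{2}) = G_{i}(E_{1} \times E_{2})$ (and likewise for the primed families) into the nested condition above. After this substitution, the statement ``for all $E_{1}\delta_{1}F_{1}$ and all $E_{2}\delta_{2}F_{2}$, $H_{i}(E_{1})(E_{2}) \, \delta_{3} \, H'_{j}(F_{1})(F_{2})$'' becomes verbatim the condition ``for all $(E_{1}\times E_{2}) \, \delta'' \, (F_{1}\times F_{2})$, $G_{i}(E_{1}\times E_{2}) \, \delta_{3} \, G'_{j}(F_{1}\times F_{2})$''. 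Thus $\{G_{i}\} \, \delta_{4} \, \{G'_{j}\}$ holds if and only if $\{\Psi(G_{i})\} \, \delta_{5} \, \{\Psi(G'_{j})\}$ holds, which says that both $\Psi$ and $\Psi^{-1}$ preserve nearness, so $\Psi$ is a proximity isomorphism and the two spaces are proximally isomorphic.

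I expect the only real obstacle to be bookkeeping rather than genuine difficulty: one must track the two layers of quantification in $\delta_{5}$ (the outer proximity on $(Z^{Y})^{X}$ and the inner mapping-space proximity $\delta'$ on $Z^{Y}$) and make sure they are tested against exactly the rectangles $E_{1}\times E_{2}$, $F_{1}\times F_{2}$ used by the product proximity $\delta''$, so that the match with $\delta_{4}$ is exact. Provided the product proximity is read on product subsets in the same way as in Proposition \ref{p1}, the equivalence of the two quantified conditions is immediate once the identity $H(E)(F)=G(E\times F)$ is inserted, and no separate argument beyond Proposition \ref{p1} is needed.
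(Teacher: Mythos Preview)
Your proposal is correct and follows essentially the same route as the paper: define the currying bijection $G \mapsto H$ with $H(E)(F)=G(E\times F)$, then verify that it and its inverse preserve the mapping-space nearness by substituting this identity into the unwound definitions of $\delta_{4}$ and $\delta_{5}$. Your write-up is, if anything, slightly more careful than the paper's in that you explicitly invoke Proposition~\ref{p1} for well-definedness and track families $\{G_i\}$, $\{G'_j\}$ rather than single maps.
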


\begin{proof}
	Define a bijective map $f : Z^{X \times Y} \rightarrow (Z^{Y})^{X}$ by $f(G) = H$. For any pc-maps $G$, $G^{'} \subset Z^{X \times Y}$ such that $G \delta_{4} G^{'}$, we have that $f(G) \delta_{5} f(G^{'})$. Indeed, for $E_{1} \times E_{2}$, $F_{1} \times F_{2} \subset X \times Y$, we have that $G(E_{1} \times E_{2}) \delta_{3} G(F_{1} \times F_{2})$. This means that $H(E_{1})(E_{2}) \delta_{3} H(F_{1})(F_{2})$. Another saying, we find $H \delta_{5} H^{'}$. Therefore, $f$ is a pc-map. For the proximal continuity of $f^{-1}$, assume that $H \delta_{5} H^{'}$. Then we have that $H(E_{1})$ and $H^{'}(F_{1})$ are near in $Z^{Y}$ for $E_{1}$, $F_{1} \subset X$. If $E_{2} \delta_{2} F_{2}$ in $Y$, then we have that $H(E_{1})(E_{2}) \delta_{3} H^{'}(F_{1})(F_{2})$. It follows that $G(E_{1} \times E_{2}) \delta_{3} G^{'}(F_{1} \times F_{2})$. Thus, we obtain that $G \delta_{4} G^{'}$, which means that $f^{-1}(H) \delta_{4} f^{-1}(H^{'})$. Finally, $f$ is a proximity isomorphism. 
\end{proof}

\begin{theorem}\label{t1}
	Let $\delta_{1}$, $\delta_{2}$, and $\delta_{3}$ be any proximities on $X$, $Y$, and $Z$, respectively. Then $((Y \times Z)^{X},\delta_{4})$ and $(Y^{X} \times Z^{X},\delta_{5})$ are proximally isomorphic spaces. 
\end{theorem}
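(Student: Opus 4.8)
The plan is to exploit the universal property of the product $Y \times Z$: a pc-map $\alpha : X \to Y \times Z$ carries exactly the same data as the pair $(p_{1} \circ \alpha,\, p_{2} \circ \alpha)$ of pc-maps into the two factors, where $p_{1} : Y \times Z \to Y$ and $p_{2} : Y \times Z \to Z$ are the projections. Concretely, I would define $f : (Y \times Z)^{X} \to Y^{X} \times Z^{X}$ by $f(\alpha) = (p_{1} \circ \alpha,\, p_{2} \circ \alpha)$, with inverse sending a pair $(\beta,\gamma)$ to the map $x \mapsto (\beta(x),\gamma(x))$. The first routine step is to check that $f$ is well defined and bijective. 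Here the projections are pc-maps: on product subsets, $E_{1} \times E_{2} \,\delta^{''}\, F_{1} \times F_{2}$ forces $E_{1} \delta_{2} F_{1}$, which is exactly $p_{1}(E_{1} \times E_{2}) \,\delta_{2}\, p_{1}(F_{1} \times F_{2})$, and symmetrically for $p_{2}$; so precomposition carries each pc-map into a pc-map, while the componentwise pairing of two pc-maps is a pc-map into $Y \times Z$ straight from the definition of the product proximity $\delta^{''}$ on $Y \times Z$.

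The heart of the argument is to show that $\delta_{4}$ and $\delta_{5}$ correspond under $f$. The key observation is that the product proximity is determined componentwise through the projections: for an image set $\alpha(E) \subset Y \times Z$ one has $p_{1}(\alpha(E)) = (p_{1}\circ\alpha)(E)$ and $p_{2}(\alpha(E)) = (p_{2}\circ\alpha)(E)$, so a nearness assertion $\alpha_{i}(E) \,\delta^{''}\, \beta_{j}(F)$ unwinds to the conjunction of $(p_{1}\alpha_{i})(E) \,\delta_{2}\, (p_{1}\beta_{j})(F)$ and $(p_{2}\alpha_{i})(E) \,\delta_{3}\, (p_{2}\beta_{j})(F)$. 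Using this, I would show that for subsets $\mathcal{A}, \mathcal{B}$ of $(Y \times Z)^{X}$ the relation $\mathcal{A} \,\delta_{4}\, \mathcal{B}$ — which by definition says $E \delta_{1} F$ implies $\alpha_{i}(E) \,\delta^{''}\, \beta_{j}(F)$ for all members — is logically equivalent, after distributing the quantifiers over the conjunction, to the statement that the $Y^{X}$-component of $f(\mathcal{A})$ is near that of $f(\mathcal{B})$ in the mapping-space sense and simultaneously the $Z^{X}$-components are near. That is precisely the defining condition for $f(\mathcal{A}) \,\delta_{5}\, f(\mathcal{B})$, since $\delta_{5}$ is the product proximity on $Y^{X} \times Z^{X}$.

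Because this chain of equivalences reads the same in both directions, the single computation yields at once that $f$ is a pc-map and that $f^{-1}$ is a pc-map, so $f$ is a proximity isomorphism and the two spaces are proximally isomorphic. I expect the main obstacle to be bookkeeping rather than genuine difficulty: the image $\alpha(E)$ is not literally the product $(p_{1}\alpha)(E) \times (p_{2}\alpha)(E)$, and the subsets $f(\mathcal{A})$ of $Y^{X} \times Z^{X}$ are graphs of the pairing rather than honest products, so one must phrase both product proximities through their projections rather than by naive factorization. Once this componentwise reading is fixed at the outset, every remaining verification is a direct unwinding of $\delta_{1}$-to-$\delta_{2}$ and $\delta_{1}$-to-$\delta_{3}$ proximal continuity.
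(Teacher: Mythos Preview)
Your proposal is correct and follows essentially the same route as the paper: the paper defines the very same map $f(\alpha)=(\pi_{1}\circ\alpha,\pi_{2}\circ\alpha)$, writes its inverse as $g(\beta,\gamma)=(\beta\times\gamma)\circ\Delta_{X}$ (which is exactly your $x\mapsto(\beta(x),\gamma(x))$), and verifies proximal continuity componentwise via the projections. Your single bidirectional equivalence argument is a mild streamlining of the paper's separate checks for $f$ and $g$, and your explicit flag that $\alpha(E)$ and $f(\mathcal{A})$ are not genuine product sets is a point the paper leaves implicit.
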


\begin{proof}
	The proximal isomorphism is given by the map \[f : ((Y \times Z)^{X},\delta_{4}) \rightarrow (Y^{X} \times Z^{X},\delta_{5})\] 
	with $f(\alpha) = (\pi_{1} \circ \alpha,\pi_{2} \circ \alpha)$, where $\pi_{1}$ and $\pi_{2}$ are the projection maps from $Y \times Z$ to the respective spaces. For any $\{\alpha_{i}\}_{i \in I}$, $\{\beta_{j}\}_{j \in J} \subset (Y \times Z)^{X}$ such that $\{\alpha_{i}\}_{i \in I}$ is near $\{\beta_{j}\}_{j \in J}$, we obtain that $\pi_{k} \circ \{\alpha_{i}\}_{i \in I}$ is near $\pi_{k} \circ \{\beta_{i}\}_{j \in J}$ for each $k \in \{1,2\}$. Therefore, we have that $(\pi_{1} \circ \{\alpha_{i}\}_{i \in I},\pi_{2} \circ \{\alpha_{i}\}_{i \in I})$ is near $(\pi_{1} \circ \{\beta_{j}\}_{j \in J},\pi_{2} \circ \{\beta_{j}\}_{j \in J})$. Thus, $f(\{\alpha_{i}\}_{i \in I})$ is near $f(\{\beta_{j}\}_{j \in J})$, i.e., $f$ is a pc-map. For the pc-map
	\[g : (Y^{X} \times Z^{X},\delta_{5}) \rightarrow ((Y \times Z)^{X},\delta_{4})\]
	with $g(\beta,\gamma) = (\beta \times \gamma) \circ \Delta_{X}$, where $\Delta_{X} : (X,\delta_{1}) \rightarrow (X^{2},\delta_{1}^{'})$ is a diagonal map of proximity spaces on $X$, we have that $g \circ f$ and $f \circ g$ are identity maps on respective proximity spaces $(Y \times Z)^{X}$ and $Y^{X} \times Z^{X}$. Consequently, $((Y \times Z)^{X},\delta_{4})$ and $(Y^{X} \times Z^{X},\delta_{5})$ are proximally isomorphic spaces. 
\end{proof}

\begin{definition}
	Let $\delta_{1}$ and $\delta_{2}$ be any proximities on $X$ and $Y$, respectively. Then the proximal evaluation map \[e_{X,Y} : (Y^{X} \times X,\delta) \rightarrow (Y,\delta_{2})\] is defined by $e(\alpha,x) = \alpha(x)$.
\end{definition}  

\quad To show that the evaluation map $e_{X,Y}$ is  a pc-map, we first assume that $(\{\alpha_{i}\}_{i \in I} \times E) \delta (\{\beta_{j}\}_{j \in J} \times F)$ in $Y^{X} \times X$. This means that $\{\alpha_{i}\}_{i \in I} \delta^{'} \{\beta_{j}\}_{j \in J}$ for a proximity relation $\delta^{'}$ on $Y^{X}$ and $E \delta_{1} F$ in $X$. It follows that $\alpha_{i}(E) \delta_{2} \beta_{j}(F)$ in $Y$ for any $i \in I$ and $j \in J$. Finally, we conclude that 
\begin{eqnarray*}
	e_{X,Y}(\{\alpha_{i}\}_{i \in I} \times E) \delta_{2} e_{X,Y}(\{\beta_{j}\}_{j \in J} \times F).
\end{eqnarray*}

\begin{example}
	Consider the proximal evaluation map $e_{I,X} : (X^{I} \times I,\delta) \rightarrow (X,\delta_{1})$. Since $X^{I} \times \{0\}$ is proximally isomorphic to $X^{I}$ by the map $(\alpha,0) \mapsto \alpha(0)$, the restriction \[e^{0}_{I,X} = e_{I,X}|_{(X^{I} \times \{0\})} : (X^{I},\delta^{'}) \rightarrow (X,\delta_{1}),\]
	defined by $e^{0}_{I,X}(\alpha) = \alpha(0)$, is a pc-map. 
\end{example}

\begin{example}
	Let $e_{I,X \times X} : ((X \times X)^{I} \times I,\delta) \rightarrow (X,\delta_{1})$ be the proximal evaluation map. By Theorem \ref{t1}, the restriction \[e^{0}_{I,X \times X} = e_{I,X \times X}|_{(X^{I} \times \{0\})} : (X^{I},\delta^{'}) \rightarrow (X \times X,\delta^{'}),\]
	defined by $e^{0}_{I,X \times X}(\alpha) = (\alpha(0),\alpha(1))$, is a pc-map. 
\end{example}

\quad Note that, in topological spaces, the map $X^{I} \rightarrow X \times X$, $\alpha \mapsto (\alpha(0),\alpha(1))$, is the path fibration. Similarly, the map $X^{I} \rightarrow X$, $\alpha \mapsto \alpha(0)$, is the path fibration with a fixed initial point at $t = 0$.

\subsection{Proximal Covering Spaces}
\label{subsec:2}

A covering space of a topological space and the fundamental group are tightly related. One can categorize all the covering spaces of a topological space using the subgroups of its fundamental group. Covering spaces are not only useful in algebraic topology, but also in complex dynamics, geometric group theory, and the theory of Lie groups.  

\begin{definition}\label{d1}
	A surjective and pc-map $p : (X,\delta) \rightarrow (X^{'},\delta^{'})$ is a proximal covering map if the following hold:
	\begin{itemize}
		\item Let $\{x^{'}\} \subseteq X^{'}$ be any subset with $\{x^{'}\} \ll_{\delta^{'}} Y^{'}$. Then there is an index set $I$ satisfying that
		\begin{eqnarray*}
			p^{-1}(Y^{'}) = \displaystyle\bigcup_{i \in I}Y_{i}
		\end{eqnarray*}
	with $V_{i} \ll_{\delta} Y_{i}$, where $V_{i} \in p^{-1}(\{x^{'}\})$ for each $i \in I$.
	    \item $Y_{i} \neq Y_{j}$ when $i \neq j$ for $i$, $j \in I$.
		\item $p|_{Y_{i}} : Y_{i} \rightarrow Y^{'}$ is a proximal isomorphism for every $i \in I$.  
	\end{itemize}
\end{definition}

\quad In Definition \ref{d1}, $(X,\delta)$ is called a proximal covering space of $(X^{'},\delta^{'})$. For $i \in I$, $Y_{i}$ is said to be a proximal sheet. For any $x^{'} \in X^{'}$, $p^{-1}(\{x^{'}\})$ is called a proximal fiber of $x^{'}$. The map $p|_{Y_{i}} : Y_{i} \rightarrow Y^{'}$ is a proximal isomorphism if the map $p : (X,\delta) \rightarrow (X^{'},\delta^{'})$ is a proximal isomorphism. However, the converse is not generally true. Given any proximity $\delta$ on $X$, it is obvious that the identity map on $X$ is always a proximal covering map. 

\begin{figure}[h]
	\centering
	\includegraphics[width=0.80\textwidth]{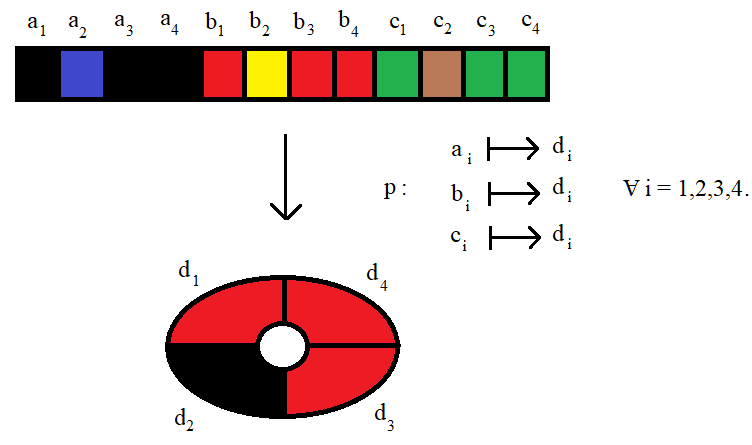}
	\caption{A map $p$ from $\{a_{1},a_{2},a_{3},a_{4}\} \cup \{b_{1},b_{2},b_{3},b_{4}\} \cup \{c_{1},c_{2},c_{3},c_{4}\}$ to $\{d_{1},d_{2},d_{3},d_{4}\}$ defined by $p(a_{i}) = p(b_{i}) = p(c_{i}) = d_{i}$ for any $i = 1,2,3,4$.}
	\label{fig:2}
\end{figure}

\begin{example}
	Assume that $X = \{a_{1},a_{2},a_{3},a_{4}\} \cup \{b_{1},b_{2},b_{3},b_{4}\} \cup \{c_{1},c_{2},c_{3},c_{4}\}$ and $X^{'} = \{d_{1},d_{2},d_{3},d_{4}\}$ are two proximity spaces such that $p : (X,\delta) \rightarrow (X^{'},\delta^{'})$ is a surjective and pc-map defined by $p(a_{i}) = p(b_{i}) = p(c_{i}) = d_{i}$ for each $i = 1,2,3,4$ (see Figure \ref{fig:2}). Let $\{d_{1}\} \subset X^{'}$ and $Y^{'} = \{d_{1},d_{2},d_{4}\}$ a proximal $\delta^{'}-$neighborhood of $\{d_{1}\}$. For $V_{1} = \{a_{1}\}$, $V_{2} = \{b_{1}\}$, and $V_{3} = \{c_{1}\}$, we have $p^{-1}(Y^{'}) = \displaystyle \bigcup_{i=1}^{3}Y_{i}$, where $Y_{1} = \{a_{1},a_{2},a_{4}\}$, $V_{2} = \{b_{1},b_{2},b_{4}\}$, and $V_{3} = \{c_{1},c_{2},c_{4}\}$. Note that $V_{1} \underline{\delta} (X-Y_{1})$, $V_{2} \underline{\delta} (X-Y_{2})$, and $V_{3} \underline{\delta} (X-Y_{3})$, i.e., $Y_{i}$ is a proximal $\delta-$neighborhood of $V_{i}$ for each $i \in \{1,2,3\}$. Moreover, for $i$, $j \in \{1,2,3\}$ with $i \neq j$, we have that $Y_{i}$ is not $Y_{j}$, and $p|_{Y_{1}} : \{a_{1},a_{2},a_{4}\} \rightarrow \{d_{1},d_{2},d_{4}\}$, $p|_{Y_{2}} : \{b_{1},b_{2},b_{4}\} \rightarrow \{d_{1},d_{2},d_{4}\}$, and $p|_{Y_{3}} : \{c_{1},c_{2},c_{4}\} \rightarrow \{d_{1},d_{2},d_{4}\}$ are proximal isomorphisms. For other points $d_{2}$, $d_{3}$, and $d_{4}$, a similar process is done. This shows that $p$ is a proximal covering map.
\end{example}

\begin{example}
	Given a proximity $\delta$ on $X$, consider the surjective and pc-map $p : (X \times \{0,1,2\cdots\},\delta^{'}) \rightarrow (X,\delta)$ with $p(x,t) = x$. For a proximal $\delta-$neighborhood $Y$ of any subset $\{x\} \subset X$, we have that \[p^{-1}(Y) = Y \times \mathbb{Z}^{+} \subset X \times \{0,1,2\cdots\}\]
	for a proximal $\delta^{'}-$neighborhood $Y^{'}$ of $Y \times \mathbb{Z}^{+}$.	Moreover, $p|_{Y \times \mathbb{Z}^{+}} : Y \times \mathbb{Z}^{+} \rightarrow Y$, $p|_{Y \times \mathbb{Z}^{+}}(x,t) = x$, is a proximal isomorphism. Thus, $p$ is a proximal covering map.
\end{example}

\begin{proposition}\label{prop1}
	Any proximal isomorphism is a proximal covering map. 
\end{proposition}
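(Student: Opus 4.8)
The plan is to check the three conditions of Definition \ref{d1} in turn, exploiting the fact that a proximal isomorphism $p : (X,\delta) \to (X',\delta')$ is a bijection. As a bijection $p$ is surjective, and as a proximal isomorphism it is a pc-map, so the two standing requirements of Definition \ref{d1} hold at once. The decisive simplification is that every fiber $p^{-1}(\{x'\})$ consists of a single point, which lets me take the index set $I$ to be a singleton and reduce the covering data to one sheet.

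First I would fix a subset $\{x'\} \subseteq X'$ and a $\delta'$-neighborhood $Y'$ with $\{x'\} \ll_{\delta'} Y'$, and set $I = \{1\}$, $Y_1 := p^{-1}(Y')$, and $V_1 := p^{-1}(\{x'\})$, the unique preimage point of $x'$. The decomposition $p^{-1}(Y') = \bigcup_{i \in I} Y_i$ then holds by construction. To obtain $V_1 \ll_\delta Y_1$ I would invoke the neighborhood formulation of proximal continuity recorded in Subsection \ref{sub1}, namely $E \ll_{\delta'} F \Rightarrow p^{-1}(E) \ll_\delta p^{-1}(F)$; applied to $\{x'\} \ll_{\delta'} Y'$ this gives precisely $p^{-1}(\{x'\}) \ll_\delta p^{-1}(Y')$, that is $V_1 \ll_\delta Y_1$.

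The second condition, $Y_i \neq Y_j$ for $i \neq j$, is vacuously satisfied since $I$ has a single element. For the third, I must verify that $p|_{Y_1} : Y_1 \to Y'$ is a proximal isomorphism. Since $Y_1 = p^{-1}(Y')$ and $p$ is a bijection, $p|_{Y_1}$ is already a bijection onto $Y'$; endowing $Y_1$ and $Y'$ with their subspace proximities and using the defining equivalence $E_1 \delta E_2 \Leftrightarrow E_1 \delta_{E} E_2$ for subsets of a subspace, I would confirm that both $p|_{Y_1}$ and its inverse stay pc-maps, as already observed in the discussion after Definition \ref{d1}. Together these three verifications show that $p$ is a proximal covering map.

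I do not expect a genuine obstacle here, since the argument is a direct unwinding of the definitions. The only point that deserves a moment of care is confirming that proximal continuity of $p$ and of $p^{-1}$ descends to the subspace proximities on $Y_1$ and $Y'$, and this is exactly what the subspace-proximity relation was defined to guarantee.
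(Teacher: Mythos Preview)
Your proposal is correct and follows essentially the same single-sheet strategy as the paper: take $I$ a singleton, $Y_1=p^{-1}(Y')$, and verify the three items of Definition~\ref{d1}. The one minor difference is that for $V_1\ll_\delta Y_1$ you invoke directly the $\ll$-characterization of proximal continuity from Subsection~\ref{sub1}, whereas the paper reaches the same conclusion via a detour through the compatible topologies (continuity of pc-maps and the relation between proximal and topological neighborhoods); your route is the more economical of the two.
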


\begin{proof}
	Let $p : (X,\delta) \rightarrow (X^{'},\delta^{'})$ be a proximal isomorphism. Then $p$ is a pc-map. Lemma 4.1 of \cite{NaimpallyPeters:2013} says that $p$ is continuous with respect to compatible topologies. Therefore, we get $p^{-1}(Y^{'}) = Y \subset X$ for an open neighborhood $Y^{'}$ of any subset $\{x^{'}\} \subseteq X^{'}$. Combining with the fact that a proximal neighborhood of a set is also a neighborhood, we conclude that $p^{-1}(Y^{'}) = Y$ for a proximal $\delta^{'}-$neighborhood $Y^{'}$ of $\{x^{'}\}$ in $X^{'}$ and a proximal $\delta-$neighborhood $Y$ of $V$, where $V \in p^{-1}(\{x^{'}\})$ in $X$. Furthermore, $p|_{Y} : Y \rightarrow Y^{'}$ is an isomorphism of proximity spaces because $p$ is an isomorphism of proximity spaces. Finally, $p$ is a proximal covering map.
\end{proof}

\quad The following diagram illustrates two ways to prove that any proximal isomorphism $p : (X,\delta) \rightarrow (X^{'},\delta^{'})$ is a covering map between respective compatible topologies on both $(X,\delta)$ and $(X^{'},\delta^{'})$:
\begin{displaymath}
	\xymatrix{
		\text{proximal isomorphism} \ar[r] \ar[d] &
		\text{proximal covering map} \ar[d] \\
		\text{homeomorphism} \ar[r] & \text{covering map}.}
\end{displaymath}

\begin{theorem}
	The cartesian product of two proximal covering maps is a proximal covering map.
\end{theorem}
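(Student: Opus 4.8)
The plan is to write $p_1 : (X_1,\delta_1) \to (X_1',\delta_1')$ and $p_2 : (X_2,\delta_2) \to (X_2',\delta_2')$ for the two proximal covering maps and to verify that the product map $p = p_1 \times p_2 : (X_1 \times X_2,\delta) \to (X_1' \times X_2',\delta')$, equipped with the product proximities $\delta$ and $\delta'$, satisfies every clause of Definition \ref{d1}. The preliminary observations are routine: $p$ is surjective because each factor is, and $p$ is a pc-map because nearness of rectangles in a product proximity is coordinatewise. Indeed, if $(E_1 \times E_2)\,\delta\,(F_1 \times F_2)$, then $E_1 \delta_1 F_1$ and $E_2 \delta_2 F_2$, so proximal continuity of $p_1$ and $p_2$ gives $p_1(E_1)\,\delta_1'\,p_1(F_1)$ and $p_2(E_2)\,\delta_2'\,p_2(F_2)$, whence $p(E_1 \times E_2)\,\delta'\,p(F_1 \times F_2)$.

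Next I would produce an evenly covered neighborhood of an arbitrary point $(x_1',x_2')$ as a product of evenly covered neighborhoods of its coordinates. Applying the covering property of $p_1$ to $\{x_1'\}$ gives a proximal neighborhood $Y_1'$ with $p_1^{-1}(Y_1') = \bigcup_{i \in I_1} Y_{1,i}$, where the sheets $Y_{1,i}$ are pairwise distinct, each $p_1|_{Y_{1,i}}$ is a proximal isomorphism onto $Y_1'$, and $V_{1,i} \ll_{\delta_1} Y_{1,i}$ for the fiber point $V_{1,i} \in p_1^{-1}(\{x_1'\})$; symmetrically, $p_2$ provides $Y_2'$ with $p_2^{-1}(Y_2') = \bigcup_{j \in I_2} Y_{2,j}$. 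Putting $W' = Y_1' \times Y_2'$ and indexing by $I = I_1 \times I_2$, the preimage factors as
\begin{eqnarray*}
	p^{-1}(W') = p_1^{-1}(Y_1') \times p_2^{-1}(Y_2') = \bigcup_{(i,j) \in I_1 \times I_2} \big( Y_{1,i} \times Y_{2,j} \big),
\end{eqnarray*}
so the candidate proximal sheets are the rectangles $Y_{1,i} \times Y_{2,j}$.

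It then remains to check the three conditions of Definition \ref{d1}. Distinctness is immediate, since all sheets are nonempty and hence $Y_{1,i} \times Y_{2,j} = Y_{1,i'} \times Y_{2,j'}$ forces $(i,j) = (i',j')$. The restriction $p|_{Y_{1,i} \times Y_{2,j}} = p_1|_{Y_{1,i}} \times p_2|_{Y_{2,j}}$ is a product of proximal isomorphisms, hence a bijection whose inverse $(p_1|_{Y_{1,i}})^{-1} \times (p_2|_{Y_{2,j}})^{-1}$ is again a pc-map by the coordinatewise argument above, so it is a proximal isomorphism onto $Y_1' \times Y_2'$. The genuinely new point is the neighborhood condition $(V_{1,i} \times V_{2,j}) \ll_\delta (Y_{1,i} \times Y_{2,j})$. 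Writing
\begin{eqnarray*}
	(X_1 \times X_2) - (Y_{1,i} \times Y_{2,j}) = \big( (X_1 - Y_{1,i}) \times X_2 \big) \cup \big( X_1 \times (X_2 - Y_{2,j}) \big),
\end{eqnarray*}
I would apply axiom \textbf{(b)} so that $V_{1,i} \times V_{2,j}$ is near this union if and only if it is near one of the two rectangular pieces; but nearness to $(X_1 - Y_{1,i}) \times X_2$ would force $V_{1,i}\,\delta_1\,(X_1 - Y_{1,i})$, contradicting $V_{1,i} \ll_{\delta_1} Y_{1,i}$, and similarly for the other piece. Hence $V_{1,i} \times V_{2,j}$ is far from the complement, i.e. $(V_{1,i} \times V_{2,j}) \ll_\delta (Y_{1,i} \times Y_{2,j})$; the same computation applied to $\{(x_1',x_2')\}$ and $W'$ shows $\{(x_1',x_2')\} \ll_{\delta'} W'$.

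I expect the main obstacle to be exactly this last verification, because the product proximity is prescribed only on rectangular subsets, whereas the even-covering condition unavoidably refers to complements of rectangles, which are not rectangles. The argument therefore rests on reducing nearness between a rectangle and a non-rectangular set back to the rectangular case through the EF-axiom \textbf{(b)} and its finite-union form; once this reduction is justified for the product proximity, the product-neighborhood claim and the product-isomorphism claim follow routinely, and the three defining clauses assemble to show that $p_1 \times p_2$ is a proximal covering map.
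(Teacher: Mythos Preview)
Your proposal is correct and follows the same route as the paper: take the product of evenly covered $\delta'$-neighborhoods, decompose the preimage as $\bigcup_{(i,j)}(Y_{1,i}\times Y_{2,j})$, and check the three clauses of Definition~\ref{d1}. In fact you are more careful than the paper, which simply asserts that $M_1'\times N_2'$ is a proximal neighborhood of the product point and never explicitly verifies the sheet condition $V_i\times W_j \ll_\delta M_i\times N_j$; your complement decomposition and use of axiom~\textbf{(b)} supply precisely the missing justification.
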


\begin{proof}
	Let $p : (X,\delta_{1}) \rightarrow (X^{'},\delta_{1}^{'})$ and $q : (Y,\delta_{2}) \rightarrow (Y^{'},\delta_{2}^{'})$ be two proximal covering maps. Then for a proximal $\delta_{1}^{'}-$neighborhood $M_{1}^{'}$ of $\{x_{1}^{'}\} \subset X^{'}$, we have that
	\begin{eqnarray*}
		p^{-1}(M_{1}^{'}) = \displaystyle\bigcup_{i \in I}M_{i}
	\end{eqnarray*}
    for a proximal $\delta_{1}-$neighborhood $M_{1}$ of $V_{i}$, where $V_{i} \in p^{-1}(\{x_{1}^{'}\})$. We also have that $M_{i} \neq M_{k}$ with any $k \in I$ when $i \neq k$. Similarly, for a proximal $\delta_{2}^{'}-$neighborhood $N_{2}^{'}$ of $\{x_{2}^{'}\} \subset Y^{'}$, we have that
    \begin{eqnarray*}
    	q^{-1}(N_{2}^{'}) = \displaystyle\bigcup_{j \in J}N_{j}
    \end{eqnarray*}
    for a proximal $\delta_{2}-$neighborhood $N_{j}$ of $W_{j}$, where $W_{j} \in q^{-1}(\{x_{2}^{'}\})$. Also, we have that $N_{i} \neq N_{l}$ with any $l \in J$ when $j \neq l$. For a proximal neighborhood $M_{1}^{'} \times N_{2}^{'}$ of $\{x_{1}^{'}\} \times \{x_{2}^{'}\} \subset X^{'} \times Y^{'}$, we get
    \[(p \times q)^{-1}(M_{1}^{'} \times N_{2}^{'}) = p^{-1}(M_{1}^{'}) \times q^{-1}(N_{2}^{'}) = \displaystyle\bigcup_{i \in I}M_{i} \times \displaystyle\bigcup_{j \in J}N_{j} = \displaystyle \bigcup_{\substack{{i \in I} \\ {j \in J}}}(M_{i} \times N_{j}).\]
    It is clear that $M_{i} \times N_{j} \neq M_{k} \times N_{l}$ when $(i,j) \neq (k,l)$ for any $i$, $k \in I$ and $j$, $l \in J$. Moreover, since $p|_{M_{i}} : M_{i} \rightarrow M_{1}^{'}$ and $q|_{N_{j}} : N_{j} \rightarrow N_{2}^{'}$ are proximal isomorphisms, the map $(p \times q)|_{M_{i} \times N_{j}} : M_{i} \times N_{j} \rightarrow M_{1}^{'} \times N_{2}^{'}$ is a proximal isomorphism. Consequently, $p \times q : X \times Y \rightarrow X^{'} \times Y^{'}$ is a proximal covering map.
\end{proof}

\subsection{Proximal Fibrations}
\label{subsec:3}

Some topological problems can be conceptualized as lifting or extension problems. In the homotopy-theoretic viewpoint, fibrations and cofibrations deal with them, respectively (see Section \ref{subsec:4} for the detail of cofibrations). Postnikov systems, spectral sequences, and obstruction theory, which are important tools constructed on homotopy theory, involve fibrations. On the other hand, the notion of proximal fibration of proximity spaces is first mentioned in \cite{PetersTane2:2022}, and we extend this with useful properties in proximity cases.

\begin{definition}
	A pc-map $p : (X,\delta) \rightarrow (X^{'},\delta^{'})$ is said to have the proximal homotopy lifting property (PHLP) with respect to a proximity space $(X^{''},\delta^{''})$ if for an inclusion map $i_{0} : (X^{''},\delta^{''}) \rightarrow (X^{''} \times I,\delta_{1})$, $i_{0}(x^{''}) = (x^{''},0)$, for every pc-map $k : (X^{''},\delta^{''}) \rightarrow (X,\delta)$, and prox-hom $G : (X^{''} \times I,\delta_{1}) \rightarrow (X^{'},\delta^{'})$ with $p \circ k = G \circ i_{0}$, then there exists a prox-hom $G^{'} : (X^{''} \times I,\delta_{1}) \rightarrow (X,\delta)$ for which $G^{'}(x^{''},0) = k(x^{''})$ and $p \circ G^{'}(x^{''},t) = G(x^{''},t)$.
	\begin{displaymath}
		\xymatrix{
			X^{''} \ar[r]^{k} \ar[d]_{i_{0}} &
			X \ar[d]^{p} \\
			X'' \times I \ar[r]_{G} \ar@{.>}[ur]^{G^{'}} & X^{'}. }
	\end{displaymath}
\end{definition}

\begin{definition}
	A pc-map $p : (X,\delta) \rightarrow (X^{'},\delta^{'})$ is said to be a proximal fibration if it has the PHLP for any proximity space $(X^{''},\delta^{''})$.
\end{definition}

\begin{example}
	For any proximity spaces $(X,\delta)$ and $(X^{'},\delta^{'})$, we shall show that the projection map $\pi_{1} : (X \times X^{'},\delta_{2}) \rightarrow (X,\delta)$ onto the first factor is a proximal fibration. Consider the diagram
	\begin{displaymath}
		\xymatrix{
			X^{''} \ar[r]^{(k_{X},k_{X^{'}})} \ar[d]_{i_{0}} &
			X \times X^{'} \ar[d]^{\pi_{1}} \\
			X'' \times I \ar[r]_{G} \ar@{.>}[ur]^{G^{'}} & X^{'}}
	\end{displaymath}
    with $\pi_{1} \circ (k_{X},k_{X^{'}}) = G \circ i_{0}$. Then there is a map $G^{'} : (X^{''} \times I,\delta_{1}) \rightarrow (X \times X^{'},\delta_{2})$ defined by $G^{'} = (G,F)$, where $F : (X^{''} \times I,\delta_{1}) \rightarrow (X^{'},\delta^{'})$ is the composition of the first projection map $(X^{''} \times I,\delta_{1}) \rightarrow (X^{''},\delta^{''})$ and $k_{X^{'}}$. Since $k_{X^{'}}$ and the first projection map are pc-maps, it follows that $F$ is a pc-map. Moreover, we get $F(x^{''},0) = F(x^{''},1) = k_{X^{'}}(x^{''})$, which means that $F$ is a (constant) prox-hom. Combining this result with the fact that $H$ is a prox-hom, we have that $G^{'}$ is a prox-hom. Moreover, we get
    \begin{eqnarray*}
    	G^{'} \circ i(x^{''}) &=& G^{'}(x^{''},0) = (G(x^{''},0),F(x^{''},0)) = (k_{X}(x^{''}),k_{X^{'}}(x^{''}))\\
    	&=& (k_{X},k_{X^{'}})(x^{''}),
    \end{eqnarray*}
    and
    \begin{eqnarray*}
    	\pi_{1} \circ G^{'}(x^{''},t) = \pi_{1}(G(x^{''},t),F(x^{''},t)) = G(x^{''},t).
    \end{eqnarray*}
    This shows that $\pi_{1}$ is a proximal fibration.
\end{example}

\begin{example}
	Let $c : (X,\delta) \rightarrow (\{x_{0}\},\delta_{0})$ be the constant map of proximity spaces. Given the diagram
	\begin{displaymath}
		\xymatrix{
			X^{''} \ar[r]^{k} \ar[d]_{i_{0}} &
			X \ar[d]^{p} \\
			X'' \times I \ar[r]_{G} \ar@{.>}[ur]^{G^{'}} & \{x_{0}\}}
	\end{displaymath}
    with the condition $p \circ k(x^{''}) = G \circ i_{0}(x^{''}) = \{x_{0}\}$. Then there exists a (constant) prox-hom $G^{'} : (X^{''} \times I,\delta_{1}) \rightarrow (X,\delta)$ defined by $G^{'}(x^{''},t) = k(x^{''})$ satisfying that
    \begin{eqnarray*}
    	&&p \circ G^{'}(x^{''},t) = p(G^{'}(x^{''},t)) = \{x_{0}\} = G(x^{''},t),\\
    	&&G^{'} \circ i_{0}(x^{''}) = G^{'}(x^{''},0) = k(x^{''}).
    \end{eqnarray*}
    This proves that $p$ is a proximal fibration.
\end{example}

\begin{proposition}
	\textbf{i)} The composition of two proximal fibrations is also a proximal fibration.
	
	\textbf{ii)} The cartesian product of two proximal fibrations is also a proximal fibration. 
\end{proposition}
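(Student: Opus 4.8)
The plan is to mimic the classical topological proofs, relying on two easy facts: the composition of pc-maps is again a pc-map (immediate from $E\delta F \Rightarrow f(E)\delta' f(F) \Rightarrow g(f(E))\delta'' g(f(F))$), and a prox-hom produced by one fibration can serve as the input datum for a second lifting problem.

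For \textbf{i)}, let $p : (X,\delta_X) \rightarrow (Y,\delta_Y)$ and $q : (Y,\delta_Y) \rightarrow (Z,\delta_Z)$ be proximal fibrations, so that $q \circ p$ is a pc-map. First I would fix an arbitrary proximity space $(A,\delta_A)$ and lifting data for $q \circ p$, namely a pc-map $k : A \rightarrow X$ and a prox-hom $G : A \times I \rightarrow Z$ with $(q \circ p) \circ k = G \circ i_0$. Since $p \circ k$ is a pc-map and $q \circ (p \circ k) = G \circ i_0$, the PHLP for $q$ gives a prox-hom $\widetilde{G} : A \times I \rightarrow Y$ with $\widetilde{G} \circ i_0 = p \circ k$ and $q \circ \widetilde{G} = G$. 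Now $(k,\widetilde{G})$ is lifting data for $p$, because $p \circ k = \widetilde{G} \circ i_0$, so the PHLP for $p$ gives a prox-hom $G' : A \times I \rightarrow X$ with $G' \circ i_0 = k$ and $p \circ G' = \widetilde{G}$. Then $(q \circ p) \circ G' = q \circ \widetilde{G} = G$ and $G' \circ i_0 = k$, so $G'$ solves the lifting problem; as $(A,\delta_A)$ was arbitrary, $q \circ p$ is a proximal fibration.

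For \textbf{ii)}, let $p : (X,\delta_X) \rightarrow (X',\delta_{X'})$ and $q : (Y,\delta_Y) \rightarrow (Y',\delta_{Y'})$ be proximal fibrations, and give $X \times Y$ and $X' \times Y'$ the product proximities. Again fix a test space $(A,\delta_A)$ and lifting data $k : A \rightarrow X \times Y$, $G : A \times I \rightarrow X' \times Y'$ with $(p \times q) \circ k = G \circ i_0$. Postcomposing with the projections splits these as $k = (k_1,k_2)$ and $G = (G_1,G_2)$, and the hypothesis becomes $p \circ k_1 = G_1 \circ i_0$ together with $q \circ k_2 = G_2 \circ i_0$. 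Applying the PHLP for $p$ to $(k_1,G_1)$ and for $q$ to $(k_2,G_2)$ yields prox-homs $G_1' : A \times I \rightarrow X$ and $G_2' : A \times I \rightarrow Y$ with $G_i' \circ i_0 = k_i$ and $p \circ G_1' = G_1$, $q \circ G_2' = G_2$. Setting $G' = (G_1',G_2')$ then gives $G' \circ i_0 = (k_1,k_2) = k$ and $(p \times q) \circ G' = (G_1,G_2) = G$, as required.

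The step I expect to be the main obstacle is justifying, in \textbf{ii)}, that the splitting and reassembly of maps are legitimate at the level of the product proximity: the product relation is specified only on product subsets $E_1 \times E_2$, so I would first record the coordinate-wise characterization already used in the proof of Theorem \ref{t1}, namely that a map into a product is a pc-map precisely when its two projection components are pc-maps. This characterization simultaneously makes the projections pc-maps (so that $k_1,k_2,G_1,G_2$ are pc-maps and $G_1,G_2$ are prox-homs), guarantees that the assembled $G' = (G_1',G_2')$ is a prox-hom into $X \times Y$, and shows that $p \times q$ is itself a pc-map; once it is in hand, both parts reduce to the diagram chasing above.
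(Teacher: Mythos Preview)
Your proposal is correct and follows essentially the same approach as the paper: for \textbf{i)} you chain the two PHLPs (lift through $q$ first, then through $p$), and for \textbf{ii)} you split the data via projections, lift componentwise, and reassemble. Your write-up is in fact cleaner than the paper's, which presents part \textbf{i)} somewhat backwards (setting up both lifting problems simultaneously and then identifying $G_2'=G_1$) and in part \textbf{ii)} glosses over exactly the product-proximity point you flag about $(G_1',G_2')$ being a pc-map.
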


\begin{proof}
	\textbf{i)} Let $p_{1} : (X_{1},\delta_{1}) \rightarrow (Y_{1},\delta_{1}^{'})$ and $p_{2} : (Y_{1},\delta_{1}^{'}) \rightarrow (Y_{2},\delta_{2}^{'})$ be any proximal fibrations. Then for the inclusion map $i_{0} : (X^{''},\delta^{''}) \rightarrow (X^{''} \times I,\delta_{3})$, pc-maps \linebreak$k_{1} : (X^{''},\delta^{''}) \rightarrow (X_{1},\delta_{1})$, $k_{2} : (X^{''},\delta^{''}) \rightarrow (Y_{1},\delta_{1}^{'})$, and proximal homotopies \linebreak$G_{1} : (X^{''} \times I,\delta_{3}) \rightarrow (Y_{1},\delta_{1}^{'})$, $G_{2} : (X^{''} \times I,\delta_{3}) \rightarrow (Y_{2},\delta_{2}^{'})$ with the property $p_{1} \circ k_{1} = G_{1} \circ i_{0}$ and $p_{2} \circ k_{2} = G_{2} \circ i_{0}$, there exist two proximal homotopies $G_{1}^{'} : (X^{''} \times I,\delta_{3}) \rightarrow (X_{1},\delta_{1})$ and $G_{2}^{'} : (X^{''} \times I,\delta_{3}) \rightarrow (Y_{1},\delta_{1}^{'})$ satisfying that
	\begin{eqnarray*}
		&& G_{1}^{'} \circ i_{0} = k_{1}, p_{1} \circ G_{1}^{'} = G_{1},\\
		&& G_{2}^{'} \circ i_{0} = k_{2}, p_{2} \circ G_{2}^{'} = G_{2}.
	\end{eqnarray*}
    If we take $G_{2}^{'} = G_{1}$, then we have the following commutative diagram:
    \begin{displaymath}
    	\xymatrix{
    		X^{''} \ar[r]^{k_{1}} \ar[d]_{i_{0}} &
    		X_{1} \ar[d]^{p_{2} \circ p_{1}} \\
    		X'' \times I \ar[r]_{G_{2}} \ar@{.>}[ur]^{G_{1}^{'}} & Y_{2}.}
    \end{displaymath}
    Thus, we get
    \begin{eqnarray*}
    	&&G_{1}^{'} \circ i_{0} = k_{1},\\
    	&&(p_{2} \circ p_{1}) \circ G_{1}^{'} = G_{2}.
    \end{eqnarray*}
    This show that the composition $p_{2} \circ p_{1}$ is a proximal fibration.
    
    \textbf{ii)} Let $p_{1} : (X_{1},\delta_{1}) \rightarrow (Y_{1},\delta_{1}^{'})$ and $p_{2} : (X_{2},\delta_{2}) \rightarrow (Y_{2},\delta_{2}^{'})$ be any proximal fibrations. Then for the inclusion map $i_{0} : (X^{''},\delta^{''}) \rightarrow (X^{''} \times I,\delta_{3})$, pc-maps $k_{1} : (X^{''},\delta^{''}) \rightarrow (X_{1},\delta_{1})$, $k_{2} : (X^{''},\delta^{''}) \rightarrow (X_{2},\delta_{2})$, and proximal homotopies $G_{1} : (X^{''} \times I,\delta_{3}) \rightarrow (Y_{1},\delta_{1}^{'})$, $G_{2} : (X^{''} \times I,\delta_{3}) \rightarrow (Y_{2},\delta_{2}^{'})$ with the property $p_{1} \circ k_{1} = G_{1} \circ i_{0}$ and $p_{2} \circ k_{2} = G_{2} \circ i_{0}$, there exist two proximal homotopies $G_{1}^{'} : (X^{''} \times I,\delta_{3}) \rightarrow (X_{1},\delta_{1})$ and $G_{2}^{'} : (X^{''} \times I,\delta_{3}) \rightarrow (X_{2},\delta_{2})$ satisfying that
    \begin{eqnarray*}
    	&& G_{1}^{'} \circ i_{0} = k_{1}, p_{1} \circ G_{1}^{'} = G_{1},\\
    	&& G_{2}^{'} \circ i_{0} = k_{2}, p_{2} \circ G_{2}^{'} = G_{2}.
    \end{eqnarray*}
    Consider the map $G_{3}^{'} = (G_{1}^{'},G_{2}^{'})$. Then $G_{3}^{'}$ is clearly a prox-hom and we have the following commutative diagram:
    \begin{displaymath}
    	\xymatrix{
    		X^{''} \ar[r]^{(k_{1},k_{2})} \ar[d]_{i_{0}} &
    		X_{1} \times X_{2} \ar[d]^{p_{1} \times p_{2}} \\
    		X'' \times I \ar[r]_{(G_{1},G_{2})} \ar@{.>}[ur]^{G_{3}^{'}} & Y_{1} \times Y_{2}.}
    \end{displaymath}
    Thus, we get
    \begin{eqnarray*}
    	&&G_{3}^{'} \circ i_{0} = (k_{1},k_{2}),\\
    	&&(p_{1} \times p_{2}) \circ G_{3}^{'} = (G_{1},G_{2}).
    \end{eqnarray*}
    This proves that the cartesian product $p_{1} \times p_{2}$ is a proximal fibration.
\end{proof}

\quad Let $f : (X,\delta_{1}) \rightarrow (Y,\delta_{2})$ be a pc-map. Then for any pc-map $g : (Z,\delta_{3}) \rightarrow (Y,\delta_{2})$, a proximal lifting of $f$ is a pc-map $h : (X,\delta_{1}) \rightarrow (Z,\delta_{3})$ satisfying that $f = g \circ h$. 

\begin{proposition}
	Let $p : (X,\delta_{1}) \rightarrow (Y,\delta_{2})$ be a proximal fibration. Then
	
	\textbf{i)} The pullback $g^{\ast}p : (P,\delta) \rightarrow (Y^{'},\delta_{2}^{'})$ is a proximal fibration for any pc-map $g : (Y^{'},\delta_{2}^{'}) \rightarrow (Y,\delta_{2})$.
	\begin{displaymath}
		\xymatrix{
			P \ar[r]^{\pi_{1}} \ar[d]_{g^{\ast}p} &
			X \ar[d]^{p} \\
			Y^{'} \ar[r]_{g} & Y.}
	\end{displaymath}

    \textbf{ii)} For any proximity space $(Z,\delta_{3})$, the map $p_{\ast} : (X^{Z},\delta_{3}^{'}) \rightarrow (Y^{Z},\delta_{3}^{''})$ is a proximal fibration.
\end{proposition}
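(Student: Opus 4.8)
The plan is to prove both parts by verifying the proximal homotopy lifting property (PHLP) directly, in each case reducing the lifting problem for the new map to a lifting problem for $p$ itself and then invoking the hypothesis that $p$ is a proximal fibration. For (i) the reduction uses the universal property of the pullback square, while for (ii) it uses the exponential correspondence of Proposition \ref{p1}.

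For part (i), the pullback is $P = \{(y',x) \in Y' \times X : g(y') = p(x)\}$ carrying the subspace proximity from $Y' \times X$, with $g^{\ast}p$ and $\pi_{1}$ the two projections and $p \circ \pi_{1} = g \circ (g^{\ast}p)$. Given an inclusion $i_{0} : X'' \to X'' \times I$, a pc-map $k : X'' \to P$, and a prox-hom $G : X'' \times I \to Y'$ with $(g^{\ast}p) \circ k = G \circ i_{0}$, I would first form the composite prox-hom $g \circ G : X'' \times I \to Y$ together with the pc-map $\pi_{1} \circ k : X'' \to X$. These satisfy $p \circ (\pi_{1} \circ k) = g \circ (g^{\ast}p) \circ k = (g \circ G) \circ i_{0}$, so they constitute lifting data for $p$. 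Since $p$ is a proximal fibration, there is a prox-hom $\widetilde{G} : X'' \times I \to X$ with $\widetilde{G} \circ i_{0} = \pi_{1} \circ k$ and $p \circ \widetilde{G} = g \circ G$. I would then set $G'(x'',t) = (G(x'',t),\widetilde{G}(x'',t))$; the identity $p \circ \widetilde{G} = g \circ G$ guarantees that $G'$ lands in $P$, and reading off the two coordinates yields $G' \circ i_{0} = k$ and $(g^{\ast}p) \circ G' = G$, so $G'$ is the required lift.

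For part (ii), where $p_{\ast}(\alpha) = p \circ \alpha$, I would transpose everything across the adjunction of Proposition \ref{p1}. Lifting data for $p_{\ast}$, namely a pc-map $k : X'' \to X^{Z}$ and a prox-hom $G : X'' \times I \to Y^{Z}$ with $p_{\ast} \circ k = G \circ i_{0}$, transposes to a pc-map $\widehat{k} : X'' \times Z \to X$ and a map $\widehat{G} : (X'' \times Z) \times I \to Y$ (after swapping the $Z$ and $I$ factors) satisfying $p \circ \widehat{k} = \widehat{G} \circ i_{0}$, which is now lifting data for $p$ over the base space $X'' \times Z$. Applying the PHLP of $p$ produces a prox-hom $\widehat{G'} : (X'' \times Z) \times I \to X$ with $\widehat{G'} \circ i_{0} = \widehat{k}$ and $p \circ \widehat{G'} = \widehat{G}$. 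Transposing $\widehat{G'}$ back gives $G' : X'' \times I \to X^{Z}$, and the same boundary identities transpose to $G' \circ i_{0} = k$ and $p_{\ast} \circ G' = G$.

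The routine content, the diagram chases and coordinate readings, is straightforward; the part needing care is the proximity bookkeeping. In (i) I must confirm that a map into $P$ is a pc-map precisely when both projections of it are pc-maps, i.e. that the pullback carries the expected universal property for the subspace proximity on $Y' \times X$, which is what legitimizes defining $G'$ coordinatewise. In (ii) the delicate step is that the reassociation and swap $(X'' \times Z) \times I \cong (X'' \times I) \times Z$ is a proximity isomorphism for the product proximity, and that under Proposition \ref{p1} this swap carries the prox-hom $\widehat{G'}$ (a homotopy in the $I$-direction over base $X'' \times Z$) to a genuine prox-hom $G'$ in the $I$-direction over base $X''$. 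I expect this transposition of the homotopy parameter to be the main obstacle, since it is exactly where the exponential law and the symmetry of the product proximity must interact correctly.
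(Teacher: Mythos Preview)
Your approach is correct and essentially identical to the paper's: for (i) the paper likewise defines the lift into $P$ coordinatewise as $(G_{1}',G_{2})$ after converting the given data into a lifting problem for $p$, and for (ii) it also transposes the lifting problem to the base $Z \times X''$, lifts against $p$, swaps factors via $Z \times X'' \times I \cong X'' \times I \times Z$, and invokes Proposition~\ref{p1} to obtain $G_{2}' : X'' \times I \to X^{Z}$. The proximity bookkeeping you flag as needing care is handled no more explicitly in the paper than in your outline.
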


\begin{proof}
	\textbf{i)} Let \[P = \{(x,y^{'}) \ | \ g(y^{'}) = p(e)\} \subseteq X \times Y^{'}\] be a proximity space with the proximity $\delta_{0}$ on itself. Since $p$ is a proximal fibration, for an inclusion map $i_{0} : (X^{''},\delta^{''}) \rightarrow (X^{''} \times I,\delta_{3})$, for any pc-map $k_{1}$ from $(X^{''},\delta^{''})$ to $(X,\delta_{1})$, and prox-hom $G_{1} : (X^{''} \times I,\delta_{3}) \rightarrow (X,\delta)$ with $p \circ k_{1} = G_{1} \circ i_{0}$, there exists a prox-hom \[G_{1}^{'} : (X^{''} \times I,\delta_{3}) \rightarrow (X,\delta_{1})\] for which $G_{1}^{'}(x^{''},0) = k_{1}(x^{''})$ and $p \circ G_{1}^{'}(x^{''},t) = G_{1}(x^{''},t)$. Assume that a map $k_{2}$ from $(X^{''},\delta^{''})$ to $(P,\delta_{0})$ is a pc-map and $G_{2} : (X^{''} \times I,\delta_{3}) \rightarrow (Y^{'},\delta_{2}^{'})$ is a prox-hom with $g^{\ast}p \circ k_{2} = G_{2} \circ i_{0}$. If we define $G_{2}^{'} : (X^{''} \times I,\delta_{3}) \rightarrow (P,\delta_{0})$ by $G_{2}^{'} = (G_{1}^{'},G_{2})$, then we observe that
	\begin{eqnarray*}
		&&G_{2}^{'} \circ i_{0} = k_{2},\\
		&&g^{\ast}p \circ G_{2}^{'} = G_{2}.
	\end{eqnarray*} 
    This gives the desired result.
    
    \textbf{ii)} Consider the following diagrams:
    \begin{displaymath}
    	\xymatrix{
    		Z \times X^{''} \ar[r]^{k_{1}} \ar[d]_{i_{0}} &
    		X \ar[d]^{p} \\
    		Z \times X'' \times I \ar[r]_{G_{1}} \ar@{.>}[ur]^{G_{1}^{'}} & Y}
    \end{displaymath}
    and
    \begin{displaymath}
    	\xymatrix{
    		X^{''} \ar[r]^{k_{2}} \ar[d]_{i_{0}} &
    		X^{Z} \ar[d]^{p_{\ast}} \\
    		X'' \times I \ar[r]_{G_{2}} \ar@{.>}[ur]^{G_{2}^{'}} & Y^{Z}.}
    \end{displaymath}
    Since $p$ is a proximal fibration, we have $H_{1}^{'} : (Z \times X'' \times I,\delta^{''}) \rightarrow (X,\delta_{1})$ as the prox-hom in the upper diagram. $Z \times X'' \times I$ is proximally isomorphic to $X^{''} \times I \times Z$ and we can think of $G_{1}^{'}$ as the prox-hom $(X^{''} \times I \times Z,\delta^{''}) \rightarrow (X,\delta_{1})$. By Proposition \ref{p1}, we have the prox-hom $G_{2}^{'} : (X^{''} \times I,\delta^{'}) \rightarrow (X^{Z},\delta_{3}^{'})$ in the lower diagram. This map satisfies the desired conditions, and thus, we conclude that $p_{\ast}$ is a proximal fibration.
\end{proof}

\subsection{Proximal Cofibrations}
\label{subsec:4}

Similar to the proximal fibration, we currently deal with the notion of proximal cofibration of proximity spaces. We first study the problem of extension in homotopy theory, and then present the definition of proximal cofibration with its basic results.

\begin{definition}\label{d4}
	Given two proximity spaces $(X,\delta)$ and $(X^{'},\delta^{'})$, a pc-map \linebreak$h : X \rightarrow X^{'}$ is said to have a proximal homotopy extension property (PHEP) with respect to a proximity space $(X^{''},\delta^{''})$ if for inclusion maps $i_{0}^{X} : (X,\delta) \rightarrow (X \times I,\delta_{1})$ and $i_{0}^{X^{'}} : (X^{'},\delta^{'}) \rightarrow (X^{'} \times I,\delta_{1}^{'})$, for every pc-map $k : (X^{'},\delta^{'}) \rightarrow (X^{''},\delta^{''})$, and prox-hom $F : (X \times I,\delta_{1}) \rightarrow (X^{''},\delta^{''})$ with $k \circ (h \times 1_{0}) = F \circ i_{0}^{X}$, then there exists a prox-hom $F^{'} : (X^{'} \times I,\delta_{1}^{'}) \rightarrow (X^{''},\delta^{''})$ satisfying $F^{'} \circ i_{0}^{X^{'}} = k$ and $F^{'} \circ (h \times 1_{I}) = F$.
\end{definition}
\begin{displaymath}
	\xymatrix{X \times 0 \ar@{^{(}->}[rr]^{i_{0}^{X}} \ar[dd]_{h \times 1_{0}} & & X \times I \ar[dd]^{h \times 1_{I}} \ar[dl]_{F} \\
		& X^{''} &\\
		X^{'} \times 0 \ar@{^{(}->}[rr]_{i_{0}^{X^{'}}} \ar[ur]^{k} & & X^{'} \times I. \ar@{.>}[ul]_{F^{'}}}
\end{displaymath}

\begin{figure}[h]
	\centering
	\includegraphics[width=0.50\textwidth]{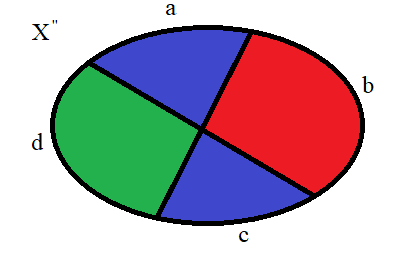}
	\caption{The picture is represented by $X^{''} = \{a,b,c,d\}$.}
	\label{fig:3}
\end{figure}

\begin{example}
	Let $X^{''} = \{a,b,c,d\}$ be a set with the proximity $\delta^{''}$ on itself as in Figure \ref{fig:3}. Let $\gamma_{1}$ and $\gamma_{2}$ be proximal paths on $X^{''}$ such that $\gamma_{1}(0) = b$, $\gamma_{1}(1) = a$, $\gamma_{2}(0) = b$, and $\gamma_{2}(1) = c$. Consider the following diagram for an inclusion map $h : (\{0\},\delta) \rightarrow (I,\delta^{'})$, where $k : (I,\delta^{'}) \rightarrow (X^{''},\delta^{''})$ is the map $\gamma_{2}$ and $F : (\{0\} \times I,\delta_{1}) \rightarrow (X^{''},\delta^{''})$ is defined by $F(0,t) = \gamma_{1}(t)$ for all $t \in I$:
	\begin{displaymath}
		\xymatrix{\{0\} \times 0 \ar@{^{(}->}[rr]^{i_{0}^{\{0\}}} \ar@{^{(}->}[dd]_{h \times 1_{0}} & & \{0\} \times I \ar@{^{(}->}[dd]^{h \times 1_{I}} \ar[dl]_{F} \\
			& X^{''} &\\
			I \times 0 \ar@{^{(}->}[rr]_{i_{0}^{I}} \ar[ur]^{k} & & I \times I,}
	\end{displaymath}
    i.e., the equality $k \circ (h \times 1_{0}) = F \circ i_{0}^{\{0\}}$ holds. Then, by Gluing Lemma, there exists a prox-hom $F^{'} : (I \times I,\delta_{1}^{'}) \rightarrow (X^{''},\delta^{''})$ defined by $F^{'}(0,t_{1}) = F(0,t_{1})$ and $F^{'}(t_{2},0) = k(t_{2})$ for all $(t_{1},t_{2}) \in I \times I$ which satisfy
    \begin{eqnarray*}
    	&&F^{'} \circ (h \times 1_{I}) = F,\\
    	&&F^{'} \circ i_{0}^{I} = k.
    \end{eqnarray*}
    Schematically, we have the diagram
    \begin{displaymath}
    	\xymatrix{\{0\} \times 0 \ar@{^{(}->}[rr]^{i_{0}^{\{0\}}} \ar@{^{(}->}[dd]_{h \times 1_{0}} & & \{0\} \times I \ar@{^{(}->}[dd]^{h \times 1_{I}} \ar[dl]_{F} \\
    		& X^{''} &\\
    		I \times 0 \ar@{^{(}->}[rr]_{i_{0}^{I}} \ar[ur]^{k} & & I \times I. \ar@{.>}[ul]_{F^{'}}}
    \end{displaymath}
\end{example}

\begin{definition}
	A pc-map $h : (X,\delta) \rightarrow (X^{'},\delta^{'})$ is said to be a proximal cofibration if it has the PHEP with respect to any proximity space $(X^{''},\delta^{''})$.
\end{definition}

\begin{example}
	Let $h : (X,\delta) \hookrightarrow (X^{'},\delta^{'})$ be an inclusion map such that $X \subset X^{'}$. Then $h$ is a natural proximal cofibration since there exists a prox-hom \[F^{'} = F|_{X^{'}} : (X^{'} \times I,\delta_{1}^{'}) \rightarrow (X^{''},\delta^{''})\] satisfying the conditions of PHEP with respect to any proximity space $(X^{''},\delta^{''})$. 
\end{example}

\begin{proposition}
	\textbf{i)} Let $h : (X,\delta) \rightarrow (X^{'},\delta^{'})$ and $h^{'} : (Y,\delta_{1}) \rightarrow (Y^{'},\delta_{1}^{'})$ be two maps such that $X$ and $X^{'}$ are proximally isomorphic to $Y$ and $Y^{'}$, respectively, and the following diagram commutes:
	\begin{displaymath}
		\xymatrix{
			X \ar[r]^{\approx_{\delta}} \ar[d]_{h} &
			Y \ar[d]^{h^{'}} \\
			X^{'} \ar[r]_{\approx_{\delta}} & Y^{'}.}
	\end{displaymath}
    Then $h$ is a proximal cofibration if and only if $h^{'}$ is a proximal cofibration.
    
    \textbf{ii)} The composition of two proximal cofibrations is also a proximal cofibration.
    
    \textbf{iii)} The coproduct of two proximal cofibrations is also a proximal cofibration.
    
    \textbf{iv)} Let $h : (X,\delta) \rightarrow (X^{'},\delta^{'})$ be a proximal cofibration and the following is a pushout diagram.
    \begin{displaymath}
    	\xymatrix{
    		X \ar[r]^{l} \ar[d]_{h} &
    		Y \ar[d]^{h^{'}} \\
    		X^{'} \ar[r]_{l^{'}} & Y^{'}.}
    \end{displaymath}
    Then $h^{'}$ is a proximal cofibration.
\end{proposition}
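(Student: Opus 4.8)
The plan is to treat all four parts as variations on one strategy: transport or assemble the homotopy-extension data through the structural maps, invoke the PHEP of the given cofibration(s), and then reassemble the resulting extension with the Gluing Lemma (Lemma \ref{l1}). Throughout I use the elementary fact that the product of a pc-map (in particular a proximal isomorphism) with the identity $1_I$ on $I$ is again a pc-map (resp. a proximal isomorphism), which is immediate from the product proximity on $X \times I$. In parts (ii) and (iv) the symbol $X''$ is occupied by a genuine space, so I reserve $Z$ for the auxiliary test space appearing in the PHEP.

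For part (i), the hypotheses are symmetric in $(X,X',h)$ and $(Y,Y',h')$: writing $\varphi : X \to Y$ and $\psi : X' \to Y'$ for the given isomorphisms with $h' \circ \varphi = \psi \circ h$, the inverse isomorphisms satisfy $h \circ \varphi^{-1} = \psi^{-1} \circ h'$, so it suffices to prove one implication, say that $h$ being a cofibration forces $h'$ to be one. I would take arbitrary PHEP data $(k,F)$ for $h'$ with respect to a test space $X''$ and pull it back to $h$ by setting $\tilde{k} = k \circ \psi$ and $\tilde{F} = F \circ (\varphi \times 1_I)$. After checking the compatibility $\tilde{k} \circ (h \times 1_0) = \tilde{F} \circ i_0^X$, the cofibration property of $h$ yields an extension $\tilde{F}'$, and $F' = \tilde{F}' \circ (\psi^{-1} \times 1_I)$ is the sought extension for $h'$; the verifications are routine diagram chases.

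For parts (ii) and (iii) the extensions are built in sequence and in parallel, respectively. For (ii), with cofibrations $h : X \to X'$ and $h' : X' \to X''$ and PHEP data $(k,F)$ for $h' \circ h$, I would first feed $(k \circ h', F)$ into the cofibration $h$ to obtain $\bar{F} : X' \times I \to Z$, then feed $(k,\bar{F})$ into the cofibration $h'$ to obtain $F' : X'' \times I \to Z$; the identity $F' \circ ((h' \circ h) \times 1_I) = \bar{F} \circ (h \times 1_I) = F$ together with $F' \circ i_0^{X''} = k$ finishes it. For (iii), given cofibrations $h_1 : X_1 \to X_1'$ and $h_2 : X_2 \to X_2'$ and using that $-\times I$ distributes over the coproduct, so $(X_1 \sqcup X_2) \times I \cong (X_1 \times I) \sqcup (X_2 \times I)$, I would split the data into its two summands, extend each separately via $h_1$ and $h_2$, and glue the two extensions; since the summands meet in the empty set, the Gluing Lemma applies immediately.

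Part (iv) is where the real work lies. Given the pushout square with $l : X \to Y$, $l' : X' \to Y'$, $h' : Y \to Y'$ and PHEP data $(k,F)$ for $h'$, I would pull $F$ back along $l$ and $k$ back along $l'$ to produce the data $(k \circ l',\, F \circ (l \times 1_I))$ for the cofibration $h$, using the pushout relation $l' \circ h = h' \circ l$ to verify compatibility; applying PHEP for $h$ gives $\hat{F} : X' \times I \to Z$ that agrees with $F$ on the image of $X \times I$. The extension $F'$ on $Y' \times I$ must then be manufactured from $\hat{F}$ and $F$. The hard part will be justifying that $Y' \times I$ is itself the pushout of $X' \times I$ and $Y \times I$ over $X \times I$ in proximity spaces — that is, that $-\times I$ preserves this pushout — so that $\hat{F}$ and $F$, which agree on the overlap, glue by Lemma \ref{l1} to a well-defined pc-map $F' : Y' \times I \to Z$. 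Once that gluing is in place, checking $F' \circ i_0^{Y'} = k$ and $F' \circ (h' \times 1_I) = F$ is immediate from the defining equations of $\hat{F}$. I expect the preservation-of-pushout step, together with verifying the proximal continuity of the glued map, to be the only substantive obstacle; the remaining bookkeeping in all four parts is formal.
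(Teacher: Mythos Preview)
Your proposal is correct and follows essentially the same strategy as the paper in all four parts: transport the PHEP data through the structural maps, apply the given cofibration(s), and reassemble. If anything, your treatment is more careful than the paper's in part~(iv), where you explicitly isolate the need for $-\times I$ to preserve the pushout (equivalently, for the Gluing Lemma to apply on $Y'\times I$); the paper simply asserts the existence of the extension $F'''$ with $F'''\circ(l'\times 1_I)=F'$ and proceeds to the formal verifications without addressing this point.
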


\begin{proof}
	\textbf{i)} Let $h$ be a proximal cofibration. By  Definition \ref{d4}, there is a prox-hom $F^{'} : (X^{'} \times I,\delta_{2}^{'}) \rightarrow (X^{''},\delta^{''})$ such that \[F^{'} \circ i_{0}^{X^{'}} = k \ \ \text{and} \ \ F^{'} \circ (h \times 1_{I}) = F\] for any pc-map $k : (X^{'},\delta^{'}) \rightarrow (X^{''},\delta^{''})$ and prox-hom $F$ from $(X \times I,\delta_{2})$ to $(X^{''},\delta^{''})$ with $k \circ (h \times 1_{0}) = F \circ i_{0}^{X}$. Assume that $\beta_{1} : X \rightarrow Y$ and $\beta_{2} : X^{'} \rightarrow Y^{'}$ are two proximal isomorphisms. Since the diagram commutes, we know that \[h^{'} \circ \beta_{1} = \beta_{2} \circ h.\] Let $i_{0}^{Y} : (Y,\delta_{1}) \rightarrow (Y \times I,\delta_{3})$ and $i_{0}^{Y^{'}} : (Y^{'},\delta_{1}^{'}) \rightarrow (Y^{'} \times I,\delta_{3}^{'})$ be two inclusion maps, $k^{'} := k \circ (\beta_{2})^{-1} : (Y^{'},\delta_{1}^{'}) \rightarrow (X^{''},\delta^{''})$ a pc-map, and $F^{''} := F^{'} \circ (\beta_{1}^{-1} \times 1_{I})$ from $(Y \times I,\delta_{3})$ to $(X^{''},\delta^{''})$ a prox-hom for which \[k^{'} \circ (h^{'} \times 1_{0}) = F^{'} \circ i_{0}^{Y}.\] Then there exists a prox-hom \[F^{'''} := F^{'} \circ ((\beta_{2})^{-1} \times 1_{I}) : (Y^{'} \times I,\delta_{3}^{'}) \rightarrow (X^{''},\delta^{''})\] such that $F^{''} \circ i_{0}^{Y^{'}} = k^{'}$ and $F^{''} \circ (h^{'} \times 1_{I}) = F^{'}$.
	Conversely, assume that $h^{'}$ is a proximal cofibration. Similarly, for a prox-hom $F^{'}$ from $(Y^{'} \times I,\delta_{3}^{'})$ to $(X^{''},\delta^{''})$ that makes $h^{'}$ a cofibration, there exists a prox-hom \[F^{''} := F^{'} \circ (f^{'} \times 1_{I}): (X^{'} \times I,\delta_{2}^{'}) \rightarrow (X^{''},\delta^{''})\] that makes $h$ a proximal cofibration.
	
	\textbf{ii)} Let $h : (X,\delta) \rightarrow (X^{'},\delta^{'})$ and $h^{'} : (X^{'},\delta^{'}) \rightarrow (Y,\delta_{1})$ be two proximal cofibrations. Then for any pc-map $k : (X^{'},\delta^{'}) \rightarrow (X^{''},\delta^{''})$ and prox-hom $F$ from $(X \times I,\delta_{2})$ to $(X^{''},\delta^{''})$ with \[k \circ (h \times 1_{0}) = F \circ i_{0}^{X},\] there is a prox-hom $F^{'} : (X^{'} \times I,\delta_{2}^{'}) \rightarrow (X^{''},\delta^{''})$ such that $F^{'} \circ i_{0}^{X^{'}} = k$ and $F^{'} \circ (h \times 1_{I}) = F$, similarly, for any pc-map $k^{'} : (Y,\delta_{1}) \rightarrow (X^{''},\delta^{''})$ and prox-hom $G^{'} : (X^{'} \times I,\delta_{2}^{'}) \rightarrow (X^{''},\delta^{''})$ with \[k^{'} \circ (h^{'} \times 1_{0}) = G^{'} \circ i_{0}^{X^{'}},\] there is a prox-hom $F^{''} : (Y \times I,\delta_{3}^{'}) \rightarrow (X^{''},\delta^{''})$ such that $F^{''} \circ i_{0}^{Y} = k^{'}$ and $F^{''} \circ (h^{'} \times 1_{I}) = G^{'}$. Combining these results with the fact $F^{'} = G^{'}$, we have the following: For a pc-map $k^{''} := k^{'}$ and prox-hom $G^{''} := F$ with \[k^{''} \circ ((h^{'} \circ h) \times 1_{0}) = G^{''} \circ i_{0}^{X},\] there is a prox-hom $F^{'''} := F^{''}$ such that \[F^{'''} \circ i_{0}^{Y} = k^{''} \ \ \text{and} \ \ F^{'''} \circ ((h^{'} \circ h) \times 1_{I}) = G^{''}.\] This proves that $h^{'} \circ h$ is a proximal cofibration. 
	
	 \textbf{iii)} Let $h_{j} : (X_{j},\delta_{j}) \rightarrow (X_{j}^{'},\delta_{j}^{'})$ be a family of cofibrations for all $j \in J$. Then we shall show that $\sqcup_{j} h_{j} : (\sqcup_{j} X_{j},\delta) \rightarrow (\sqcup_{j} X_{j}^{'},\delta^{'})$ is a cofibration. Since for all $j \in J$, $h_{j} : (X_{j},\delta_{j}) \rightarrow (X_{j}^{'},\delta_{j}^{'})$ is cofibration, we have that for any pc-map $k_{j}$ from $(X_{j}^{'},\delta^{'})$ to $(X^{''},\delta^{''})$ and prox-hom $F_{j} : (X_{j} \times I,\delta_{2}) \rightarrow (X^{''},\delta^{''})$ with \[k_{j} \circ (h_{j} \times 1_{0}) = F_{j} \circ i_{0}^{X_{j}},\] there is a prox-hom $F^{'}_{j} : (X_{j}^{'} \times I,\delta_{2}^{'}) \rightarrow (X^{''},\delta^{''})$ such that $F^{'}_{j} \circ i_{0}^{X_{j}^{'}} = k_{j}$ and $F^{'}_{j} \circ (h_{j} \times 1_{I}) = F_{j}$. Now assume that for a pc-map $\sqcup k_{j} : (\sqcup_{j} X_{j}^{'},\delta^{'}) \rightarrow (X^{''},\delta^{''})$ and prox-hom $\sqcup_{j} F_{j} : (\sqcup_{j} X_{j} \times I,\delta_{4}) \rightarrow (X^{''},\delta^{''})$ with \[\sqcup_{j} k_{j} \circ (\sqcup_{j} h_{j} \times 1_{0}) = \sqcup_{j} F_{j} \circ i_{0}^{\sqcup_{j} X_{j}^{'}}.\] Then there exists a map $\sqcup_{j} F^{'}_{j} : (\sqcup_{j} X_{j}^{'} \times I,\delta_{5}) \rightarrow (X^{''},\delta^{''})$ such that $F^{'}_{j} = \sqcup_{j} F^{'}_{j} \circ i_{j}$ for a map $i_{j} : (X_{j}^{'} \times I,\delta_{2}^{'}) \rightarrow (\sqcup_{j} X_{j}^{'} \times I,\delta_{5})$. If we define $i_{j}^{'}$ as $i_{j} \circ i_{0}^{X_{j}^{'}}$, then we find that $i_{0}^{\sqcup_{j} X_{j}^{'}} = \sqcup_{j} i_{j}^{'}$. It follows that
	 \begin{eqnarray*}
	 	\sqcup_{j} F^{'}_{j} \circ i_{0}^{\sqcup_{j} X_{j}^{'}} = \sqcup_{j} k_{j}
	 \end{eqnarray*} 
     and 
     \begin{eqnarray*}
     	\sqcup_{j} F^{'}_{j} \circ (\sqcup_{j} h_{j} \times 1_{I}) = \sqcup_{j} F_{j}.
     \end{eqnarray*}
     Finally, we have that $\sqcup_{j} h_{j}$ is a cofibration.
     
     \textbf{iv)} Let $h : (X,\delta) \rightarrow (X^{'},\delta^{'})$ be a proximal cofibration, i.e., there is a prox-hom $F^{'} : (X^{'} \times I,\delta_{2}^{'}) \rightarrow (X^{''},\delta^{''})$ such that \[F^{'} \circ i_{0}^{X^{'}} = k \ \ \text{and} \ \ F^{'} \circ (h \times 1_{I}) = F\] for any pc-map $k : (X^{'},\delta^{'}) \rightarrow (X^{''},\delta^{''})$ and prox-hom $F$ from $(X \times I,\delta_{2})$ to $(X^{''},\delta^{''})$ with $k \circ (h \times 1_{0}) = F \circ i_{0}^{X}$. Since we have a pushout diagram, it follows that $l^{'} \circ h = h^{'} \circ l$ holds. Now assume that $k^{'} : (Y^{'},\delta_{1}^{'}) \rightarrow (X^{''},\delta^{''})$ is a pc-map, and $F^{''}$ from $(Y^{'} \times I,\delta_{3})$ to $(X^{''},\delta^{''})$ is a prox-hom with $k^{'} \circ l^{'} = k$, $F^{''} \circ (l \times 1_{I})$, and \[k^{'} \circ (h^{'} \times 1_{0}) = F^{''} \circ i_{0}^{Y^{'}}.\] Then there exists a prox-hom \[F^{'''} : (Y^{'} \times I,\delta_{3}) \rightarrow (X^{''},\delta^{''})\] such that $F^{'''} \circ (l^{'} \times 1_{I}) = F^{'}$. Moreover, we have that
     \begin{eqnarray*}
     	F^{'''} \circ (l^{'} \times 1_{I}) = F^{'} &\Rightarrow& F^{'''} \circ (l^{'} \times 1_{I}) \circ i_{0}^{X^{'}} = F^{'} \circ i_{0}^{X^{'}} \\
     	&\Rightarrow& F^{'''} \circ i_{0}^{Y^{'}} \circ l^{'} = k \\
     	&\Rightarrow& F^{'''} \circ i_{0}^{Y^{'}} \circ l^{'} = k^{'} \circ l^{'} \\
     	&\Rightarrow& F^{'''} \circ i_{0}^{Y^{'}} = k^{'},
     \end{eqnarray*}
     and
     \begin{eqnarray*}
     	F^{'} \circ (h \times 1_{I}) = F^{'} &\Rightarrow& F^{''} \circ (l^{'} \times 1_{I}) \circ (h \times 1_{I}) = F^{''} \circ (l \times 1_{I}) \\
     	&\Rightarrow& F^{'''} \circ ((l^{'} \circ h) \times 1_{I}) = F^{''} \circ (l \times 1_{I}) \\
     	&\Rightarrow& F^{'''} \circ ((h^{'} \circ l) \times 1_{I}) = F^{''} \circ (l \times 1_{I}) \\
     	&\Rightarrow& F^{'''} \circ (h^{'} \times 1_{I}) \circ (l \times 1_{I}) = F^{''} \circ (l \times 1_{I}) \\
     	&\Rightarrow& F^{'''} \circ (h^{'} \times 1_{I}) = F^{''}.
     \end{eqnarray*}
     As a consequence, $h^{'}$ is a proximal cofibration.
\end{proof}

\begin{theorem}
	$h : (X,\delta) \rightarrow (X^{'},\delta^{'})$ is a proximal cofibration if and only if $(X^{'} \times 0) \cup (X \times I)$ is a proximal retract of $X^{'} \times I$.
\end{theorem}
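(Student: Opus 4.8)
The plan is to transcribe the classical topological equivalence into the proximal setting, using the proximal Gluing Lemma (Lemma \ref{l1}) to supply proximal continuity wherever maps are assembled from pieces. Throughout I equip $R := (X' \times 0) \cup (X \times I)$ with the subspace proximity inherited from $X' \times I$ (reading $X \times I$ as its image under $h \times 1_I$, so that $R$ is genuinely a subspace of $X' \times I$), and I write $j : R \hookrightarrow X' \times I$ for the inclusion. The whole argument rests only on two soft facts: that compositions and (co)restrictions of pc-maps are again pc-maps, and that the PHEP of Definition \ref{d4} is an instance of a homotopy extension that we are free to test against any chosen proximity space.

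For the forward implication, I would apply the PHEP to the \emph{single} test space $X'' := R$. Take $k : (X',\delta') \rightarrow (R,\delta'')$ to be $x' \mapsto (x',0)$ and $F : (X \times I,\delta_{1}) \rightarrow (R,\delta'')$ to be $(x,t) \mapsto (h(x),t)$; each is a corestriction of a canonical inclusion, hence a pc-map, and they agree on $X \times 0$, so $k \circ (h \times 1_{0}) = F \circ i_{0}^{X}$. Because $h$ is a proximal cofibration, there is a prox-hom $F' : (X' \times I,\delta_{1}') \rightarrow (R,\delta'')$ with $F' \circ i_{0}^{X'} = k$ and $F' \circ (h \times 1_{I}) = F$. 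The first identity says $F'(x',0) = (x',0)$ and the second says $F'(h(x),t) = (h(x),t)$, so $F' \circ j = 1_{R}$; thus the very map produced by the PHEP is a proximal retraction of $X' \times I$ onto $R$.

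For the converse, suppose $r : X' \times I \rightarrow R$ is a proximal retraction, so $r \circ j = 1_{R}$. Given any test space $(X'',\delta'')$, a pc-map $k : (X',\delta') \rightarrow (X'',\delta'')$, and a prox-hom $F : (X \times I,\delta_{1}) \rightarrow (X'',\delta'')$ with $k \circ (h \times 1_{0}) = F \circ i_{0}^{X}$, I would first glue: the assignments $(x',0) \mapsto k(x')$ on $X' \times 0$ and $(h(x),t) \mapsto F(x,t)$ on $X \times I$ agree on the overlap $X \times 0$ precisely by the compatibility hypothesis, so Lemma \ref{l1} yields a pc-map $G : R \rightarrow X''$. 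Setting $F' := G \circ r$ gives a pc-map (hence a prox-hom) $X' \times I \rightarrow X''$, and since $r$ fixes $R$ pointwise one computes $F' \circ i_{0}^{X'}(x') = G(x',0) = k(x')$ and $F' \circ (h \times 1_{I})(x,t) = G(h(x),t) = F(x,t)$, which are exactly the two conditions required by the PHEP. Hence $h$ is a proximal cofibration.

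The step I expect to be the main obstacle is not conceptual but structural: verifying that the subspace proximity on $R$ is the correct ambient relation for Lemma \ref{l1} to apply, and that post-composing the glued map $G$ with the retraction $r$ (converse direction) and corestricting the canonical inclusions (forward direction) really do land in the class of pc-maps. Each of these reduces to closure of proximal continuity under composition and restriction together with the gluing lemma, so no new estimates on the nearness relation $\delta$ are needed; the care lies entirely in keeping the several proximities $\delta,\delta',\delta_{1},\delta_{1}',\delta''$ attached to the correct spaces.
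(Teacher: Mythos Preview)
Your proposal is correct and follows essentially the same route as the paper: in the forward direction you apply the PHEP with the test space $X'' = R = (X'\times 0)\cup(X\times I)$ and the canonical inclusions as $k$ and $F$, obtaining the retraction as the resulting $F'$; in the converse you glue $k$ and $F$ into a pc-map $G$ on $R$ via Lemma~\ref{l1} and set $F' = G\circ r$. Your write-up is in fact more explicit than the paper's (which leaves the choice of $k$ and $F$ implicit in the forward direction and contains a few typographical slips in the converse), but the underlying argument is the same.
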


\begin{proof}
	Let $X^{''} = (X^{'} \times 0) \cup (X \times I)$. If $f$ is a proximal cofibration, then for any pc-map $k : (X^{'},\delta^{'}) \rightarrow (X^{''},\delta^{''})$ and prox-hom $F : (X \times I,\delta_{2}) \rightarrow (X^{''},\delta^{''})$ with $k \circ (h \times 1_{0}) = F \circ i_{0}^{X}$, there is a prox-hom $F^{'} : (X^{'} \times I,\delta_{2}^{'}) \rightarrow (X^{''},\delta^{''})$ such that $F^{'} \circ i_{0}^{X^{'}} = k$ and $F^{'} \circ (h \times 1_{I}) = F$. Hence, $F^{'}$ is a proximal retraction of $X^{'} \times I$. Conversely, let $h : (X^{'} \times I,\delta_{2}^{'}) \rightarrow (X^{''},\delta^{''})$ be a proximal retraction. Assume that $k : (X^{'},\delta^{'}) \rightarrow (Y,\delta)$ is a pc-map and $F : (X \times I,\delta_{2}) \rightarrow (Y,\delta)$ is a prox-hom with \[k \circ (h \times 1_{0}) = F \circ i_{0}^{X}.\] Define a map $F^{''} : (X^{''},\delta^{''}) \rightarrow (Y,\delta)$ by $F^{''}(x^{'},t) = F(x^{'},t)$ and $F^{''}(x^{'},0) = k(x)$. By Lemma \ref{l1}, $F^{''}$ is a pc-map. Therefore, the map $F^{'} = F^{''} \circ k$ is a proximal fibration satisfying that $F^{'} \circ i_{0}^{X^{'}} = k$ and $F^{'} \circ (h \times 1_{I}) = F$. This shows that $h$ is a proximal cofibration.
\end{proof}

\section{Descriptive Proximity Definitions}
\label{sec:3}

\quad This section is dedicated to describing the concepts given in Section \ref{sec:2} on descriptive proximity spaces. Recall that a (spatial) proximity is also a descriptive proximity, and note that, in the examples of this section, descriptions of feature vectors consider the colors of boxes or some parts of balls (see Example \ref{ex1}, Example \ref{ex2}, and Example \ref{ex3}).

\begin{definition}
	Let $(X,\delta^{1}_{\Phi})$ and $(Y,\delta^{2}_{\Phi})$ be two descriptive proximity spaces. The descriptive proximal mapping space $Y^{X}$ is defined as the set \[\{\alpha : X \rightarrow Y \ | \ \alpha \ \text{is a dpc-map}\}\] having the following descriptive proximity relation $\delta_{\Phi}$ on itself: Let $E$, $F \subset X$ and $\{\alpha_{i}\}_{i \in I}$ and $\{\beta_{j}\}_{j \in J}$ be any subsets of dpc-maps in $Y^{X}$. We say that $\{\alpha_{i}\}_{i \in I} \delta_{\Phi} \{\beta_{j}\}_{j \in J}$ if the fact $E \delta^{1}_{\Phi} F$ implies that $\alpha_{i}(E) \delta^{2}_{\Phi} \beta_{j}(F)$.
\end{definition}

\begin{example}\label{ex1}
	Consider the set $X = \{a,b,c,d,e,f,g,h\}$ in Figure \ref{fig:1} with the descriptive proximity $\delta_{\Phi}$, where $\Phi$ is a set of probe functions that admits colors of given boxes. Define three descriptive proximal paths $\gamma_{1}$, $\gamma_{2}$, and $\gamma_{3} \in X^{I}$ by
	\begin{eqnarray*}
		&&\gamma_{1} : a \mapsto b \mapsto c \mapsto d, \\
		&& \gamma_{2} : c \mapsto b \mapsto a \mapsto h, \\
		&& \gamma_{3} : a \mapsto h \mapsto g \mapsto f.
	\end{eqnarray*}
	For all $t \in I$, $\gamma_{1}(t) \delta_{\Phi} \gamma_{2}(t)$. Indeed,
	\begin{eqnarray*}
		\gamma_{1}(t) = 
		\begin{cases}
			\text{red}, & t \in [0,1/4] \ \text{and} \ [3/4,1] \\
			\text{green}, & t \in [1/4,2/4] \\
			\text{black}, & t \in [2/4,3/4]
		\end{cases} \ \
	    = \gamma_{2}(t),
	\end{eqnarray*} 
    namely that, $\gamma_{1}$ is descriptively near $\gamma_{2}$. However, for $t \in [1/4,2/4]$, we have that $\alpha_{1}(t) =$ green and $\alpha_{3}(t) =$ black, that is, $\alpha_{1}$ and $\alpha_{3}$ are not descriptively near in $X$. 
\end{example}

\begin{definition}
	We say that a map $H : (X,\delta^{1}_{\Phi}) \rightarrow (Z^{Y},\delta_{\Phi}^{'})$ is descriptive proximally continuous if the fact $E \delta^{1}_{\Phi} F$ implies that $H(E) \delta_{\Phi}^{'} H(F)$ for any subsets $E$, $F \subset X$.
\end{definition} 

\begin{definition}
	For any descriptive proximity spaces $(X,\delta^{1}_{\Phi})$ and $(Y,\delta^{2}_{\Phi})$, the descriptive proximal evaluation map \[e_{X,Y} : (Y^{X} \times X,\delta_{\Phi}) \rightarrow (Y,\delta^{2}_{\Phi})\] is defined by $e(\alpha,x) = \alpha(x)$.
\end{definition}  

\begin{proposition}
	The descriptive proximal evaluation map $e_{X,Y}$ is a dpc-map.
\end{proposition}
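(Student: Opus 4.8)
The plan is to transcribe, step for step, the verification already carried out for the spatial evaluation map, replacing every spatial nearness $\delta$ by its descriptive counterpart $\delta_{\Phi}$ and invoking the descriptive axioms \textbf{(f)}--\textbf{(k)} in place of \textbf{(a)}--\textbf{(e)}. To show $e_{X,Y}$ is a dpc-map I must check that descriptively near subsets of $Y^{X} \times X$ have descriptively near images in $Y$. Writing $\delta_{\Phi}^{'}$ for the descriptive proximity carried by the mapping space $Y^{X}$, I begin with two subsets in product form, $\{\alpha_{i}\}_{i \in I} \times E$ and $\{\beta_{j}\}_{j \in J} \times F$, and assume $(\{\alpha_{i}\}_{i \in I} \times E) \, \delta_{\Phi} \, (\{\beta_{j}\}_{j \in J} \times F)$ for the product descriptive proximity $\delta_{\Phi}$ on $Y^{X} \times X$.

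Next I would unpack this hypothesis through the descriptive product proximity defined just before Definition \ref{d9}: descriptive nearness of the two product sets means $\{\alpha_{i}\}_{i \in I} \, \delta_{\Phi}^{'} \, \{\beta_{j}\}_{j \in J}$ in $Y^{X}$ together with $E \, \delta^{1}_{\Phi} \, F$ in $X$. Feeding $E \, \delta^{1}_{\Phi} \, F$ into the defining condition of the descriptive mapping-space proximity, the relation $\{\alpha_{i}\}_{i \in I} \, \delta_{\Phi}^{'} \, \{\beta_{j}\}_{j \in J}$ then forces $\alpha_{i}(E) \, \delta^{2}_{\Phi} \, \beta_{j}(F)$ in $Y$ for every $i \in I$ and $j \in J$. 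Since $e_{X,Y}(\{\alpha_{i}\}_{i \in I} \times E) = \bigcup_{i} \alpha_{i}(E)$ and $e_{X,Y}(\{\beta_{j}\}_{j \in J} \times F) = \bigcup_{j} \beta_{j}(F)$, it remains only to promote the pairwise nearness to nearness of these unions, which is precisely what the descriptive union axiom \textbf{(i)} delivers; this yields $e_{X,Y}(\{\alpha_{i}\}_{i \in I} \times E) \, \delta^{2}_{\Phi} \, e_{X,Y}(\{\beta_{j}\}_{j \in J} \times F)$ and finishes the argument.

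The one point requiring care is the restriction to product-form subsets: a general subset of $Y^{X} \times X$ need not split as a product $\{\alpha_{i}\}_{i\in I} \times E$, so the product descriptive proximity only directly governs pairs of the above shape. I would handle this exactly as the paper does for the spatial evaluation map, namely keep the verification at the level of product subsets on which $\delta_{\Phi}$ is defined, and rely on axiom \textbf{(i)} both to pass from the collection of relations $\alpha_{i}(E) \, \delta^{2}_{\Phi} \, \beta_{j}(F)$ to nearness of the full image sets and, if one wishes to treat a genuinely arbitrary subset, to decompose it into its product constituents. No genuine new obstacle arises beyond this bookkeeping, since each implication above is forced by a single definition or by axiom \textbf{(i)}.
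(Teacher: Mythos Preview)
Your proposal is correct and follows essentially the same route as the paper: start from product-form subsets, unpack the product descriptive proximity into $\{\alpha_i\}\,\delta_{\Phi}'\,\{\beta_j\}$ and $E\,\delta_{\Phi}^{1}\,F$, apply the mapping-space proximity definition to obtain $\alpha_i(E)\,\delta_{\Phi}^{2}\,\beta_j(F)$ for all $i,j$, and conclude nearness of the images. You are slightly more explicit than the paper in invoking axiom \textbf{(i)} for the final passage to unions and in flagging the product-form restriction, but the paper simply asserts that last implication without comment, so your added care is a refinement rather than a departure.
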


\begin{proof}
	We shall show that for any $E$, $F \subset X$ and $\{\alpha_{i}\}_{i \in I}$, $\{\beta_{j}\}_{j \in J} \subset Y^{X}$, $(\{\alpha_{i}\}_{i \in I} \times E) \delta_{\Phi} (\{\beta_{j}\}_{j \in J} \times F)$ implies $e_{X,Y}(\{\alpha_{i}\}_{i \in I} \times E) \delta^{2}_{\Phi} e_{X,Y}(\{\beta_{j}\}_{j \in J} \times F)$. 
	\begin{eqnarray*}
		(\{\alpha_{i}\}_{i \in I} \times E) \delta_{\Phi} (\{\beta_{j}\}_{j \in J} \times F) \ \ &\Rightarrow& \{\alpha_{i}\}_{i \in I} \delta_{\Phi}^{'} \{\beta_{j}\}_{j \in J} \ \ \text{and} \ \ E \delta^{1}_{\Phi} F \\
		&\Rightarrow& \alpha_{i}(E) \delta^{2}_{\Phi} \beta_{j}(F), \ \ \forall i \in I, \ \forall j \in J \\
		&\Rightarrow& e_{X,Y}(\{\alpha_{i}\}_{i \in I} \times E) \delta^{2}_{\Phi} e_{X,Y}(\{\beta_{j}\}_{j \in J} \times F),
	\end{eqnarray*}
    where $Y^{X}$ has a descriptive proximity $\delta_{\Phi}^{'}$.
\end{proof}

\begin{definition}\label{d3}
	A surjective and dpc-map $p : (X,\delta_{\Phi}) \rightarrow (X^{'},\delta_{\Phi}^{'})$ between any descriptive proximity spaces $(X,\delta_{\Phi})$ and $(X^{'},\delta_{\Phi}^{'})$ is a descriptive proximal covering map if the following hold:
	\begin{itemize}
		\item Let $\{x^{'}\} \subseteq X^{'}$ be any subset with $\{x^{'}\} \ll_{\delta_{\Phi}^{'}} Y^{'}$. Then there is an index set $I$ satisfying that
		\begin{eqnarray*}
			p^{-1}(Y^{'}) = \displaystyle\bigcup_{i \in I}Y_{i}
		\end{eqnarray*}
		with $V_{i} \ll_{\delta_{\Phi}} Y_{i}$, where $V_{i} \in p^{-1}(\{x^{'}\})$ for each $i \in I$.
		\item $Y_{i} \neq Y_{j}$ when $i \neq j$ for $i$, $j \in I$.
		\item $p|_{Y_{i}} : Y_{i} \rightarrow Y^{'}$ is a descriptive proximal isomorphism for every $i \in I$.  
	\end{itemize}
\end{definition}

\quad In Definition \ref{d3}, $(X,\delta_{\Phi})$ is called a descriptive proximal covering space of $(X^{'},\delta_{\Phi}^{'})$. For $i \in I$, $Y_{i}$ is said to be a descriptive proximal sheet. For any $x^{'} \in X^{'}$, $p^{-1}(\{x^{'}\})$ is called a descriptive proximal fiber of $x^{'}$. The map $p|_{Y_{i}} : Y_{i} \rightarrow Y^{'}$ is a descriptive proximal isomorphism if the map $p : (X,\delta_{\Phi}) \rightarrow (X^{'},\delta_{\Phi}^{'})$ is a descriptive proximal isomorphism. However, the converse is not generally true. Given any descriptive proximity space $(X,\delta_{\Phi})$, it is obvious that the identity map on $X$ is always a descriptive proximal covering map.  

\begin{example}\label{ex2}
	Consider the surjective and dpc-map $p : (X,\delta_{\Phi}) \rightarrow (X^{'},\delta_{\Phi}^{'})$, defined by $p(a_{i}) = p(b_{i}) = p(c_{i}) = d_{i}$ for any $i = 1,2,3,4$, in Figure \ref{fig:2}, where $\Phi$ is a set of probe functions that admits colors of given shapes. Let $\{d_{1}\} \subset X^{'}$ and $Y^{'} = \{d_{1},d_{3},d_{4}\}$ a $\delta_{\Phi}^{'}-$neighborhood of $\{d_{1}\}$. For $V_{1} = \{a_{1}\}$, $V_{2} = \{b_{1}\}$, and $V_{3} = \{c_{1}\}$, we have that $p^{-1}(Y^{'}) = \displaystyle \bigcup_{i=1}^{3}Y_{i}$, where $Y_{1} = \{a_{1},a_{3},a_{4}\}$, $Y_{2} = \{b_{1},b_{3},b_{4}\}$, and $Y_{3} = \{c_{1},c_{3},c_{4}\}$. This gives us that for all $i \in \{1,2,3\}$, $Y_{i}$ is a $\delta_{\Phi}-$neighborhood of $V_{i}$. We also observe that $Y_{i} \neq Y_{j}$ if $i \neq j$ for $i$, $j \in \{1,2,3\}$. In addition, $p|_{Y_{i}} : Y_{i} \rightarrow Y^{'}$ is a descriptive proximal isomorphism for each $i$. If one considers $d_{3}$ and $d_{4}$, the same process can be repeated. Let $\{d_{2}\} \ll_{\delta_{\Phi}^{'}} \{d_{2}\} = Y^{'}$ in $X^{'}$. Then $p^{-1}(Y^{'}) = Y_{1} \cup Y_{2} \cup Y_{3}$, where $Y_{1} = \{a_{2}\}$, $Y_{2} = \{b_{2}\}$, and $Y_{3} = \{c_{2}\}$. We observe that $V_{1} = \{a_{2}\} \ll_{\delta_{\Phi}} Y_{1}$, $V_{2} = \{b_{2}\} \ll_{\delta_{\Phi}} Y_{2}$, and $V_{3} = \{c_{2}\} \ll_{\delta_{\Phi}} Y_{3}$. Note that $Y_{1} \neq Y_{2} \neq Y_{3}$. Furthermore, $p|_{Y_{i}} : Y_{i} \rightarrow Y^{'}$ is a descriptive proximal isomorphism for each $i = 1,2,3$. This proves that $p$ is a descriptive proximal covering map.
\end{example}

\begin{definition}
	A dpc-map $p : (X,\delta_{\Phi}) \rightarrow (X^{'},\delta_{\Phi}^{'})$ is said to have the descriptive proximal homotopy lifting property (DPHLP) with respect to a descriptive proximity space $(X^{''},\delta_{\Phi}^{''})$ if, for an inclusion map $i_{0} : (X^{''},\delta_{\Phi}^{''}) \rightarrow (X^{''} \times I,\delta^{1}_{\Phi})$ defined by $i_{0}(x^{''}) = (x^{''},0)$, for every dpc-map $h : (X^{''},\delta_{\Phi}^{''}) \rightarrow (X,\delta_{\Phi})$, and dprox-hom $G : (X^{''} \times I,\delta^{1}_{\Phi}) \rightarrow (X^{'},\delta_{\Phi}^{'})$ with $p \circ h = G \circ i_{0}$, then there exists a dprox-hom $G^{'} : (X^{''} \times I,\delta^{1}_{\Phi}) \rightarrow (X,\delta_{\Phi})$ for which $G^{'}(x^{''},0) = h(x^{''})$ and $p \circ G^{'}(x^{''},t) = G(x^{''},t)$.
	\begin{displaymath}
		\xymatrix{
			X^{''} \ar[r]^{h} \ar[d]_{i_{0}} &
			X \ar[d]^{p} \\
			X'' \times I \ar[r]_{G} \ar@{.>}[ur]^{G^{'}} & X^{'}. }
	\end{displaymath}
\end{definition}

\begin{definition}
	A map $p : (X,\delta_{\Phi}) \rightarrow (X^{'},\delta_{\Phi}^{'})$, which is a dpc-map, is said to be a descriptive proximal fibration if it has the DPHLP for any descriptive proximity space $(X^{''},\delta_{\Phi}^{''})$.
\end{definition}

\begin{definition}\label{d6}
	Given two descriptive proximity spaces $(X,\delta_{\Phi})$ and $(X^{'},\delta_{\Phi}^{'})$, a dpc-map $h : X \rightarrow X^{'}$ is said to have a dprox-hom extension property (DPHEP) with respect to a descriptive proximity space $(X^{''},\delta_{\Phi}^{''})$ if there exists a dprox-hom \[F^{'} : (X^{'} \times I,\delta_{\Phi}^{1'}) \rightarrow (X^{''},\delta_{\Phi}^{''})\] satisfying the conditions $F^{'} \circ i_{0}^{X^{'}} = k$ and $F^{'} \circ (h \times 1_{I}) = F$ for any dpc-map $k : (X^{'},\delta_{\Phi}^{'}) \rightarrow (X^{''},\delta_{\Phi}^{''})$, and dprox-hom $F : (X \times I,\delta_{\Phi}^{1}) \rightarrow (X^{''},\delta_{\Phi}^{''})$ with the equality $k \circ (h \times 1_{0}) = F \circ i_{0}^{X}$, where the maps $i_{0}^{X} : (X,\delta_{\Phi}) \rightarrow (X \times I,\delta^{1}_{\Phi})$ and $i_{0}^{X^{'}} : (X^{'},\delta_{\Phi}^{'}) \rightarrow (X^{'} \times I,\delta_{\Phi}^{1'})$ are inclusions.
\end{definition}
\begin{displaymath}
	\xymatrix{X \times 0 \ar@{^{(}->}[rr]^{i_{0}^{X}} \ar[dd]_{h \times 1_{0}} & & X \times I \ar[dd]^{h \times 1_{I}} \ar[dl]_{F} \\
		& X^{''} &\\
		X^{'} \times 0 \ar@{^{(}->}[rr]_{i_{0}^{X^{'}}} \ar[ur]^{k} & & X^{'} \times I. \ar@{.>}[ul]_{F^{'}}}
\end{displaymath}

\begin{definition}
	A dpc-map $f : (X,\delta_{\Phi}) \rightarrow (X^{'},\delta_{\Phi}^{'})$ is said to be a descriptive proximal cofibration if it has the DPHEP with respect to any descriptive proximity space $(X^{''},\delta_{\Phi}^{''})$.
\end{definition}

\begin{example}\label{ex3}
	Let $(X^{''},\delta_{\Phi}^{''})$ be a descriptive proximity space as in Figure \ref{fig:3}, where $\Phi$ is a set of probe functions which admits colors of given rounds. Assume that $\gamma_{1}$ and $\gamma_{2}$ are descriptive proximal paths on $X^{''}$ such that $\gamma_{1}$ is a descriptive proximal path from $b$ to $a$ and $\gamma_{2}$ is a descriptive proximal path from $b$ to $c$. Let $h : (\{0\},\delta_{\Phi}) \rightarrow (I,\delta_{\Phi}^{'})$ be an inclusion map. For a dpc-map $k : (I,\delta_{\Phi}^{'}) \rightarrow (X^{''},\delta_{\Phi}^{''})$ defined as $k = \gamma_{2}$, and a dprox-hom $F : (\{0\} \times I,\delta_{\Phi}^{1}) \rightarrow (X^{''},\delta_{\Phi}^{''})$ defined by $F(0,t) = \alpha(t)$ for all $t \in I$ with the property $k \circ (h \times 1_{0}) = F \times i_{0}^{\{0\}}$, there exists a dprox-hom \[F^{'} : (I \times I,\delta_{\Phi}^{1'}) \rightarrow (X^{''},\delta_{\Phi}^{''})\] defined by $F^{'}(0,t_{1}) = F(0,t_{1})$ and $F^{'}(t_{2},0) = k(t_{2})$ for all $(t_{1},t_{2}) \in I \times I$ which satisfy
	\begin{eqnarray*}
		&&F^{'} \circ (h \times 1_{I}) = F,\\
		&&F^{'} \circ i_{0}^{I} = k.
	\end{eqnarray*}
    In another saying, the diagram
    \begin{displaymath}
    	\xymatrix{\{0\} \times 0 \ar@{^{(}->}[rr]^{i_{0}^{\{0\}}} \ar@{^{(}->}[dd]_{h \times 1_{0}} & & \{0\} \times I \ar@{^{(}->}[dd]^{h \times 1_{I}} \ar[dl]_{F} \\
    		& X^{''} &\\
    		I \times 0 \ar@{^{(}->}[rr]_{i_{0}^{I}} \ar[ur]^{k} & & I \times I \ar@{.>}[ul]_{F^{'}}}
    \end{displaymath}
    holds.
\end{example}

\section{Conclusion}
\label{sec:4}
\quad A subfield of topology called homotopy theory investigates spaces up to continuous deformation.  Although homotopy theory began as a topic in algebraic topology, it is currently studied as an independent discipline. For instance, algebraic and differential nonlinear equations emerging in many engineering and scientific applications can be solved using homotopy approaches. As an example, these equations include a set of nonlinear algebraic equations that model an electrical circuit. In certain studies, the aging process of the human body is presented using the algebraic topology notion of homotopy. In addition to these examples, one can easily observe homotopy theory once more when considering the algorithmic problem of robot motion planning. In this sense, this research is planned to accelerate homotopy theory studies within proximity spaces that touch many important application areas. Moreover, this examination encourages not only homotopy theory but also homology and cohomology theory to take place within proximity spaces. The powerful concepts of algebraic topology always enrich the proximity spaces and thus it becomes possible to see the topology even at the highest level fields of science such as artificial intelligence and medicine.

\acknowledgment{The second author is grateful to the Azerbaijan State Agrarian University for all their hospitality and generosity during his stay. This work has been supported by the Scientific and Technological Research Council of Turkey TÜBİTAK-1002-A with project number 122F454.}

\end{document}